\documentclass[a4paper,12pt]{article}

\usepackage{amsthm}
\usepackage{amssymb}
\usepackage{enumerate}
\usepackage{hyperref}
\usepackage{graphicx}
\usepackage{stackrel}
\usepackage{mathtools}
\usepackage[a4paper,margin=2.4cm]{geometry}


\theoremstyle{plain}

\newtheorem{theorem}{Theorem}[section]
\newtheorem{proposition}[theorem]{Proposition}
\newtheorem{lemma}[theorem]{Lemma}  
\newtheorem{corollary}[theorem]{Corollary}

\theoremstyle{remark}

\newtheorem{observation}[theorem]{Observation}
\newtheorem{remark}[theorem]{Remark}

\theoremstyle{definition}

\newtheorem{definition}[theorem]{Definition}


\newcommand{\norm}[1]{\| #1 \|}

\title{On extremal eigenvalues of the graph Laplacian}
\author{Andrea Serio\footnote{\noindent Department of Mathematics, Stockholm University, 10691 Stockholm, Sweden. \newline Email: \texttt{andrea.serio@math.su.se}.}}
\date{4th August 2020}


\begin{document}

\maketitle

\begin{abstract}
    Upper and lower estimates of eigenvalues of the Laplacian on a metric graph have been established in 2017 by G.~Berkolaiko, J.B.~Kennedy, P.~Kurasov and D.~Mugnolo.
    Both these estimates can be achieved at the same time only by highly degenerate eigenvalues which we call \emph{maximally degenerate}.
    By comparison with the maximal eigenvalue multiplicity proved by I.~Kac and V.~Pivovarchik in 2011 we characterize the family of graphs exhibiting maximally degenerate eigenvalues which we call \emph{lasso trees}, namely graphs constructed from trees by attaching lasso graphs to some of the vertices.
\end{abstract}

\section*{Overview}

We are interested in the study of two bounds of the eigenvalues of the graph Laplacian proven in \cite{BKKM17} which depend on a few simple geometrical and topological properties of the graph, namely the total length $\mathcal{L}$, the number of Dirichlet and Neumann pendant vertices $\mathcal{D}$ and $\mathcal{N}$, and the first Betti number $\beta$.
\begin{align}
    \label{lwb}
    \lambda_n \geq {{m}}_n
    &= 
    \begin{cases}
        \frac{\pi^2}{\mathcal{L}^2} \frac{n^2}{4}                                       &\text{if } n < \mathcal{N} + \beta, \\
        \frac{\pi^2}{\mathcal{L}^2} \left( n - \frac{\mathcal{N} + \beta}{2}\right)^2   &\text{if } n \geq \mathcal{N} + \beta
    \end{cases}
    \qquad &n \geq 2 \\
    \label{upb}
    \lambda_n \leq {{M}}_n
    &=
    \frac{\pi^2}{\mathcal{L}^2} \left( n - 2 + \mathcal{D} + \frac{\mathcal{N} + \beta}{2} + \beta \right)^2
    &n \in \mathbb{N},
\end{align}
If $\mathcal{D} \neq 0$ then the lower bound estimate holds for all $n \in \mathbb{N}$.
Otherwise if $\mathcal{D} = 0$ then $\lambda_1 = 0$.

In \cite{KuSe18} it was shown that (\ref{upb}) is attained by an infinite sequence of eigenvalues $\{\mu_{n_i} = {{M}}_{n_i} \}_{i \in \mathbb{N}}$ generated by a family of graphs with $\mathcal{D} = \mathcal{N} = 0$ and any $\beta \geq 2$, or $\beta = \mathcal{D} + \mathcal{N} = 1$, called respectively Windmill graphs, Neumann lasso graph and Dirichlet lasso graph.
It can be observed that these graphs also provide examples of sequences of eigenvalues which exhibit the equality in (\ref{lwb}), $\{\nu_{n_j} = {{m}}_{n_j} \}_{j \in \mathbb{N}}$.
Remarkably, the sequences $\mu_{n_i}$ and $\nu_{n_j}$ coincide.
This is possible because the indices ${n_i}$ and ${n_j}$ are respectively the smallest and the largest of a sequence of degenerate eigenvalues with multiplicity $m$:
\begin{equation}\label{eq:sharpdegenerate}
    i=j \iff \mu_{n_j} = \nu_{n_i} \iff n_j - n_i = m-1.
\end{equation}
We call {\bf lower sharp} and {\bf upper sharp} eigenvalues those which satisfy the equality in  (\ref{lwb}) and (\ref{upb}) respectively, and in general we call {\bf (degenerate) sharp eigenvalues} the eigenvalues of multiplicity $m \geq 1$ ($m \geq 2$) with smallest index upper sharp and largest index lower sharp like those in equation (\ref{eq:sharpdegenerate}).
The purpose of this work is to investigate which graphs exhibit sharp eigenvalues and discuss their properties.
The text is organized into three sections: Introduction and notation, Properties of sharp eigenvalues, and Main results.
We can summarize our findings as follows.

In Proposition~\ref{prp:sharpmult}, by direct comparison of (\ref{lwb}) and (\ref{upb}), we obtain an upper bound for the maximal eigenvalue multiplicity $m_\mathcal{U} = m_\mathcal{U}(\mathcal{G}) = \mathcal{D} + \mathcal{N} + 2 \beta - 1$.
Eigenvalues of multiplicity $m_\mathcal{U}$ are called {\bf maximally degenerate eigenvalues}.

In Theorem~\ref{thm:characterization} we show that sharp eigenvalues are characterized by being maximally degenerate.
This is used to show that sharp eigenvalues are preserved when multiple graphs are joined together at one of their Dirichlet pendant vertices (Lemma~\ref{lem:joinDirichlet}) or when a loop graph---with certain prescribed length---is attached to any Neumann pendant vertex (Lemma~\ref{lem:attachloop}).

In the proof of the main result, Theorem~\ref{thm:main}, we show how the aforementioned Lemmata can be used to construct a graph with arbitrary $\mathcal{N}, \mathcal{D}, \beta$ which produce sequences of degenerate sharp eigenvalues.
Graphs which can be constructed by recursive applications of Lemmata~\ref{lem:joinDirichlet} and~\ref{lem:attachloop} are trees where some of the pendant vertices have a loop graph attached, or, equivalently, trees decorated with some lasso graphs (also called tadpole or lollipop graphs), for this reason we call them {\bf lasso trees}.

By comparing $m_\mathcal{U}$ with the maximal eigenvalue multiplicity $m_\mathcal{M}$ proved in \cite{KaPi11} we observe that $m_\mathcal{U}(\mathcal{G}) \geq m_\mathcal{M}(\mathcal{G})$ with the equality occurring if and only if $\mathcal{G}$ is a lasso tree.
Finally we conclude in Theorem~\ref{thm:comparison} that sharp eigenvalues appear only in the spectra of lasso trees.

\section{Introduction and notation}

In this section we present the essentials about metric graphs used in the present text, for a more general introduction we refer to \cite{BeKu13} and \cite{Ku20}, see also \cite{KuSt02} and the recent \cite{Mu19}.

\paragraph{Metric graphs.}
Metric graphs are constructed as the quotient space of a set of distinct real intervals under an equivalence relation on the set of their endpoints.
Let ${E} = \sqcup_{i} e_i,\, e_i := [x_{2i-1},x_{2i}]$ be the disjoint union of closed real intervals and let $\sim$ be an equivalence relation over the endpoints of ${E}$.
The quotient space $\mathcal{G} = {E} / \sim$ is a metric graph, whose set of edges and vertices are ${E} = {E}(\mathcal{G})$ and, respectively, ${V} = {V}(\mathcal{G}) =  \sqcup_{i} \{ x_{2i-1},x_{2i} \} \,/ \sim$.
The metric and measure over $\mathcal{G}$ are inherited from the Euclidean metric and Lebesgue measure over the edges.
Moreover we denote by $\mathcal{L} = \sum_{i} |x_{2i} - x_{2i-1}|$ the total length of $\mathcal{G}$.
Since we are interested in metric graphs which are compact and with finite total length, we assume that $\mathcal{G}$ has a finite number of edges, $|{E}| < \infty$, each of them compact.
We allow the presence of loops and multiple edges.

\paragraph{Cycles and the first Betti number.}
A cycle is a finite sequence of distinct edges $\{ e^{(j)} \}_{j=1}^n$ associated to a sequence of distinct vertices $\{ v^{(j)} \}_{j=1}^{n}$ such that
\begin{itemize}
    \item $e^{(j)}$ is incident to $v^{(j)}$ and $v^{(j+1)}$ for $j=1,\dots,n-1$,
    \item $e^{(n)}$ is incident to $v^{(n)}$ and $v^{(1)}$.
\end{itemize}
A cycle of length $1$ is also called loop.
A graph with just one edge which is also a loop is called loop graph.

We denote by $\beta = \beta_1 = |{E}| - |{V}| + 1$, the first Betti number of $\mathcal{G}$;
$\beta$ coincides with the circuit rank of $\mathcal{G}$, namely the least number of edges that need to be removed in order to turn $\mathcal{G}$ into a tree.

\paragraph{Functions on metric graphs.}
Let $\textit{L}_2(\mathcal{G}) := \bigoplus_{i} \textit{L}_2 [x_{2i-1},x_{2i}]$. 
The space $\textit{L}_2(\mathcal{G})$ equipped with the inner product $\langle f , g \rangle := \sum_{i} \int_{e_i} f \overline{g}\, dx$ is a well defined Hilbert space.
If $f \in \textit{L}_2(\mathcal{G})$ is continuously differentiable over the edge $e_i = [x_{2i-1},x_{2i}]$, the oriented derivatives of $f$ at the endpoints of the interval $e_i$ are defined by $\partial f (x_{2i-1}) = f'(x_{2i-1})$ and $\partial f (x_{2i}) = -f'(x_{2i})$.
The oriented derivatives are well defined for functions in the Sobolev space $H^2(\mathcal{G}) := \bigoplus_{i} H^2 [x_{2i-1},x_{2i}]$.

\paragraph{Laplacian, vertex conditions, and eigenvalues.}
Given a subset of the pendant vertices $D$, which we call Dirichlet vertices, we define the Laplacian operator $L = L(\mathcal{G},D) := -\frac{d^2}{dx^2}$ with domain $\mathfrak{D}(L) = \mathfrak{D}(\mathcal{G},D)$ as the set of functions $f \in H^2(\mathcal{G})$ subject to the following conditions:
\begin{itemize}
    \item $f$ is continuous at the vertices, so $f \in \mathcal{C}(\mathcal{G})$ (continuity condition),
    \item the oriented derivatives of $f$ sum to zero at each non Dirichlet vertex: \\ $\sum_{x_j \in v} \partial f(x_j) = 0 ~~ \forall v \in {V} \setminus D$ (Kirchhoff condition),
    \item $f$ vanishes at the Dirichlet vertices, $f(v) = 0\,\forall v \in D$ (Dirichlet condition).
\end{itemize}

The continuity and Kirchhoff conditions together are called standard vertex conditions (in the literature sometimes called natural).

Standard vertex conditions at pendant vertices $v \notin D$ read as Neumann conditions: $f'(v) = 0$; hence we call these vertices Neumann and denote their set by $N$.
We denote by the corresponding calligraphic letter the cardinality of the two different type of pendant vertices $\mathcal{N} = |N|, \mathcal{D} = |D|$.

Standard vertex conditions at a vertex $v$ of degree two read as continuity of both the function and its derivative.
Let $\mathcal{G}$ have a pair of edges $e_i, e_j$ incident to a vertex $v$ of degree two and let $\mathcal{G}'$ be the graph where $e_i, e_j$ are replaced by a single edge with length equal to the sum of the lengths of $e_i, e_j$.
Then $\mathcal{G}',\textit{L}_2(\mathcal{G}')$, and $\mathfrak{D}(\mathcal{G}',D)$ are, respectively, isomorphic to $\mathcal{G},\textit{L}_2(\mathcal{G})$, and $\mathfrak{D}(\mathcal{G},D)$.
Hence, degree two vertices play no role in the study and can be freely removed whenever they occur in the construction of graphs.

Under the above hypothesis $L$ is a self-adjoint unbounded positive operator whose spectrum is exclusively discrete with unique accumulation point at $+\infty$, \cite{BeKu13}.
We denote the spectrum by $\sigma(L) = \{ \lambda_n \}_{n \in \mathbb{N}}$.
Whenever we say that an indexed eigenvalue $\lambda_n$ has multiplicity $m$ we assume $n$ to be the smallest index of the degenerate eigenvalue, i.e. $\lambda_n = \dots = \lambda_{n+m-1}$.
Moreover, unless differently stated, we shall assume $\mathcal{G}$ to be connected.
Under this hypothesis follows that the ground state $\lambda_1$ is simple (see \cite{Ku19}) and $\lambda_1 = 0$ if and only if $D = \emptyset$.
In \cite{BKKM17} the authors show that if $\mathcal{G}$ is not a loop graph then the eigenvalues of $L(\mathcal{G},D)$ satisfy the inequalities (\ref{lwb}), (\ref{upb}).

\paragraph{Quantum Graphs.}
The term quantum graph is used in general to refer to the triple $\Gamma = (\mathcal{G},-\frac{d^2}{dx^2} + q(x),\mathfrak{D}(\mathcal{G},D))$.
In the present setup $q \equiv 0$, so a metric graph $\mathcal{G}$ together with a set of Dirichlet vertices $D$ suffice to determine a quantum graph $\Gamma$.
Hence we use the generic term graph to refer to both metric and quantum graph whenever the set of Dirichlet vertices and the associated Laplacian are clear from the context.
In particular we may speak of the eigenvalues of a graph meaning the eigenvalues of the associated Laplacian.

In the study of the spectral estimates of quantum graphs several techniques have been developed, many of which put in relation modifications of the metric graph with the changes occurring in the spectrum.
Several of these are discussed in \cite{BKKM19} under the name of \emph{surgery principles}.
Lemmata \ref{lem:joinDirichlet} and \ref{lem:attachloop} make use of a particular case of two such principles.
In line with the previous paragraph, any modification brought to a graph $\mathcal{G}$ and its set of Dirichlet vertices $D$ should be reflected in the associated Laplacian.

\section{Properties of sharp eigenvalues}

\subsection{Sharp and maximally degenerate eigenvalues}
We start by making a general observation.

\begin{observation}\label{obs:strictineq}
    The sequences $\{{{m}}_n\}_{n \in \mathbb{N}}$ and $\{{{M}}_n\}_{n \in \mathbb{N}}$ given by (\ref{lwb}) and (\ref{upb}) are strictly increasing.
    As a consequence of this, if $\lambda_n = {{m}}_n$ then $\lambda_n < {{m}}_{n+1} \leq \lambda_{n+1}$; hence
    \begin{itemize}
        \item if $\lambda_n$ is lower sharp then $\lambda_n < \lambda_{n+1}$.
    \end{itemize}
    \noindent Similarly $\lambda_n = {{M}}_n$ implies that
    \begin{itemize}
        \item if $\lambda_n$ is upper sharp then $\lambda_{n-1} < \lambda_n$.
    \end{itemize}
\end{observation}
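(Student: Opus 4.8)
The plan is to prove the two bulleted implications directly from the asserted strict monotonicity of the sequences $\{m_n\}$ and $\{M_n\}$, so the argument splits naturally into two parts that mirror each other.

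First I would establish the strict monotonicity claims, since the rest hangs on them. For $\{M_n\}$ this is immediate: from (\ref{upb}), $M_n = \frac{\pi^2}{\mathcal{L}^2}(n - 2 + \mathcal{D} + \frac{\mathcal{N}+\beta}{2} + \beta)^2$ is a squared affine function of $n$ with strictly positive slope in $n$, and for $n \in \mathbb{N}$ the base $n - 2 + \mathcal{D} + \frac{\mathcal{N}+\beta}{2} + \beta$ should be nonnegative (using $\mathcal{D} + \mathcal{N} + 2\beta \geq 1$ for any graph that is not a single interval), so squaring preserves the strict increase. For $\{m_n\}$ one checks the two pieces of the piecewise definition in (\ref{lwb}) separately and then verifies that no decrease occurs at the junction $n = \mathcal{N}+\beta$; on each branch the argument of the square is again a nonnegative affine function of $n$ with positive slope.

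Next, granting monotonicity, the first bullet follows in one line: if $\lambda_n = m_n$ (that is, $\lambda_n$ is lower sharp), then since $\lambda_{n+1} \geq m_{n+1}$ by the lower bound (\ref{lwb}) and $m_{n+1} > m_n = \lambda_n$ by strict monotonicity, we get $\lambda_n < m_{n+1} \leq \lambda_{n+1}$, hence $\lambda_n < \lambda_{n+1}$. Symmetrically, for the second bullet, if $\lambda_n = M_n$ (upper sharp), then $\lambda_{n-1} \leq M_{n-1} < M_n = \lambda_n$, giving $\lambda_{n-1} < \lambda_n$. In the lower-sharp chain one must keep track of the hypothesis $n \geq 2$ attached to (\ref{lwb}) when $\mathcal{D} = 0$, and in the upper-sharp chain one needs $n \geq 2$ simply so that $\lambda_{n-1}$ exists and the bound $M_{n-1}$ is available.

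The only genuine obstacle is the monotonicity verification at the boundary of the piecewise formula for $m_n$: one must confirm that the value of the first branch at $n = \mathcal{N}+\beta - 1$ is strictly smaller than the value of the second branch at $n = \mathcal{N}+\beta$, rather than assuming each branch is increasing in isolation. This is a short but necessary computation comparing $\frac{(\mathcal{N}+\beta-1)^2}{4}$ with $\left(\frac{\mathcal{N}+\beta}{2}\right)^2$, and I would handle it explicitly; everywhere else the monotonicity is a routine consequence of squaring increasing nonnegative affine functions.
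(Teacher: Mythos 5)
Your proposal is correct and follows essentially the same route as the paper, which states this as an Observation without a separate proof and relies on exactly the monotonicity of the two sequences plus the two one-line chains $\lambda_n = m_n < m_{n+1} \leq \lambda_{n+1}$ and $\lambda_{n-1}\leq M_{n-1} < M_n = \lambda_n$ that you give. One small correction to your justification for $\{M_n\}$: nonnegativity of the base $n-2+\mathcal{D}+\frac{\mathcal{N}+\beta}{2}+\beta$ at $n=1$ requires $\mathcal{D}+\mathcal{N}+2\beta\geq 2$ rather than $\geq 1$ (with $\mathcal{N}=1$, $\mathcal{D}=\beta=0$ the base would be $-\frac12$ and one would get $M_1=M_2$), but the stronger inequality does hold for every connected graph other than a cycle (a tree has at least two pendants, and a non-tree that is not a cycle has either $\beta\geq 2$ or $\beta=1$ plus a pendant), so the argument goes through.
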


\begin{proposition}\label{prp:sharpmult}
    The multiplicity of any eigenvalue of the graph Laplacian is at most $\mathcal{D} + \mathcal{N} + 2\beta - 1$.
    Eigenvalues with maximal multiplicity are called maximally degenerate eigenvalues.
\end{proposition}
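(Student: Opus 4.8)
The plan is to exploit the fact that a degenerate eigenvalue is simultaneously constrained from above and from below by the two estimates, evaluated at the extreme indices it occupies. Suppose $\lambda$ is an eigenvalue of multiplicity $m$, so that $\lambda = \lambda_n = \lambda_{n+1} = \cdots = \lambda_{n+m-1}$ with $n$ the smallest index. Since all $m$ of these indexed eigenvalues share the common value $\lambda$, applying the upper estimate (\ref{upb}) at the smallest index and the lower estimate (\ref{lwb}) at the largest index yields
\[
    {{m}}_{n+m-1} \leq \lambda_{n+m-1} = \lambda = \lambda_n \leq {{M}}_n,
\]
so that ${{m}}_{n+m-1} \leq {{M}}_n$. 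This single inequality should already encode the desired bound; the remainder is a matter of inserting the explicit formulas and simplifying.

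Next I would split according to the branch of the piecewise lower bound. In the principal case $n + m - 1 \geq \mathcal{N} + \beta$, the lower estimate reads ${{m}}_{n+m-1} = \frac{\pi^2}{\mathcal{L}^2}\bigl(n+m-1-\frac{\mathcal{N}+\beta}{2}\bigr)^2$, and substituting this together with ${{M}}_n$ into ${{m}}_{n+m-1} \leq {{M}}_n$ gives an inequality between two squared linear expressions. Taking square roots—after checking that both arguments are non-negative—reduces it to
\[
    n + m - 1 - \frac{\mathcal{N}+\beta}{2} \leq n - 2 + \mathcal{D} + \frac{\mathcal{N}+\beta}{2} + \beta,
\]
and cancelling $n$ and rearranging yields exactly $m \leq \mathcal{D} + \mathcal{N} + 2\beta - 1$. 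It then remains to dispose of the complementary case $n + m - 1 < \mathcal{N} + \beta$. Here I would argue combinatorially rather than analytically: since $n, m, \mathcal{N}, \beta$ are integers and $n \geq 1$, the defining inequality gives $m \leq \mathcal{N} + \beta - n \leq \mathcal{N} + \beta - 1$, which is bounded above by $\mathcal{D} + \mathcal{N} + 2\beta - 1$ because $\mathcal{D} + \beta \geq 0$. Thus the bound holds a fortiori in this case, without recourse to the eigenvalue estimates at all.

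I expect the main obstacle to lie in the bookkeeping around the square-root step and the degenerate low-index cases rather than in any deep idea. Specifically, passing from $A^2 \leq B^2$ to $A \leq B$ requires $A, B \geq 0$: the argument of ${{m}}_{n+m-1}$ is non-negative precisely because we are in the regime $n+m-1 \geq \mathcal{N}+\beta$, which forces $n+m-1-\frac{\mathcal{N}+\beta}{2} \geq \frac{\mathcal{N}+\beta}{2} \geq 0$, while the argument of ${{M}}_n$ is non-negative once $n \geq 2$. The index $n=1$ is covered separately, since the ground state $\lambda_1$ is simple and hence has $m = 1$; the trivial bound $1 \leq \mathcal{D} + \mathcal{N} + 2\beta - 1$ then holds for every connected graph that is not a loop graph, as such graphs always satisfy $\mathcal{D} + \mathcal{N} + 2\beta \geq 2$. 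Collecting the two regimes of the lower bound together with this boundary case completes the argument.
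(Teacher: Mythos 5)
Your proof is correct and follows the same core strategy as the paper: sandwiching the degenerate eigenvalue between the lower estimate at index $n+m-1$ and the upper estimate at index $n$ to obtain ${{m}}_{n+m-1} \leq {{M}}_n$, then resolving the two branches of the piecewise lower bound, with the principal branch $n+m-1 \geq \mathcal{N}+\beta$ yielding the claimed bound by direct comparison of the linear arguments. The only divergence is in the complementary branch $n+m-1 < \mathcal{N}+\beta$: the paper substitutes ${{m}}_{n+m-1} = \frac{\pi^2}{\mathcal{L}^2}\frac{(n+m-1)^2}{4}$ into the spectral inequality to get $m \leq n + 2\mathcal{D} + \mathcal{N} + 3\beta - 3$ and then combines this with the case assumption, whereas you discard the spectral inequality there entirely and note that the case assumption alone, together with integrality and $n \geq 1$, already forces $m \leq \mathcal{N}+\beta - n \leq \mathcal{N}+\beta-1 \leq \mathcal{D}+\mathcal{N}+2\beta-1$. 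Your route through that sub-case is slightly more economical, and you also make explicit the non-negativity checks needed to pass from the squared quantities to their arguments as well as the $n=1$ boundary case (simple ground state, $\mathcal{D}+\mathcal{N}+2\beta \geq 2$ for connected non-loop graphs), both of which the paper leaves implicit.
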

\begin{proof}
    Consider $\lambda_n$ be a degenerate eigenvalue of multiplicity $m \geq 2$, so $n \geq 2$ by the simplicity of the ground state.
    By direct application of the inequalities (\ref{lwb},\ref{upb}) to $\lambda_n$ and $\lambda_{n+m-1}$ we have
    \begin{equation}\label{eq:ineq_lambda_mu}
        {{m}}_{n+m-1} \leq {{M}}_n.
    \end{equation}
    Assuming $n+m-1 \geq \mathcal{N} + \beta$, then (\ref{eq:ineq_lambda_mu}) implies $m \leq \mathcal{D} + \mathcal{N} + 2\beta - 1$.
    
    If instead we assume $n+m-1 < \mathcal{N} + \beta$ then (\ref{eq:ineq_lambda_mu}) implies $m \leq n + 2\mathcal{D} + \mathcal{N} + 3\beta - 3$ which combined with the assumption leads to $m < \mathcal{D} + \mathcal{N} + 2\beta - 1$.
\end{proof}

\begin{lemma}\label{lem:sharpmult}
    Let $\lambda_n$ be a degenerate eigenvalue of multiplicity $m \geq 2$.
    Then $\lambda_n$ is a sharp degenerate eigenvalue 
    if and only if $\lambda_n$ is maximally degenerate $m = \mathcal{D} + \mathcal{N} + 2\beta - 1$.
\end{lemma}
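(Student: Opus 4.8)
The plan is to read the lemma off directly from the squeezing inequality already used in the proof of Proposition~\ref{prp:sharpmult}. For a degenerate eigenvalue $\lambda_n = \lambda_{n+1} = \dots = \lambda_{n+m-1}$ of multiplicity $m \geq 2$, applying the lower bound (\ref{lwb}) at the largest index and the upper bound (\ref{upb}) at the smallest index gives the chain
\[
    {{m}}_{n+m-1} \leq \lambda_{n+m-1} = \lambda_n \leq {{M}}_n,
\]
which is exactly (\ref{eq:ineq_lambda_mu}). My first step is to note that, by definition, $\lambda_n$ is sharp degenerate precisely when the two outer inequalities are equalities, i.e.\ when $\lambda_n = {{M}}_n$ (smallest index upper sharp) and $\lambda_{n+m-1} = {{m}}_{n+m-1}$ (largest index lower sharp). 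Since every eigenvalue in the block is equal to the common value $\lambda_n$, a squeeze argument shows these two equalities hold simultaneously if and only if the single equality ${{m}}_{n+m-1} = {{M}}_n$ holds: equality on both ends trivially forces ${{m}}_{n+m-1} = {{M}}_n$, and conversely ${{m}}_{n+m-1} = {{M}}_n$ collapses the sandwich so that $\lambda_n = {{m}}_{n+m-1} = {{M}}_n$.

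The second step is to convert the equality ${{m}}_{n+m-1} = {{M}}_n$ into the multiplicity statement $m = \mathcal{D} + \mathcal{N} + 2\beta - 1$. This is where the one-sided estimate in Proposition~\ref{prp:sharpmult} becomes a genuine equivalence. In the regime $n + m - 1 \geq \mathcal{N} + \beta$ both ${{m}}_{n+m-1}$ and ${{M}}_n$ are $\tfrac{\pi^2}{\mathcal{L}^2}$ times the square of a nonnegative affine expression in $n$ and $m$ (using $n \geq 2$ for the base of ${{M}}_n$ and $n+m-1 \geq \mathcal{N}+\beta$ for that of ${{m}}_{n+m-1}$), so taking square roots turns ${{m}}_{n+m-1} \leq {{M}}_n$ into the linear inequality $m \leq \mathcal{D} + \mathcal{N} + 2\beta - 1$. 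Monotonicity of the square root then makes equality on one side equivalent to equality on the other.

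The only point requiring care—and the main obstacle—is ruling out the complementary regime $n + m - 1 < \mathcal{N} + \beta$, where ${{m}}_{n+m-1} = \tfrac{\pi^2}{\mathcal{L}^2}\tfrac{(n+m-1)^2}{4}$. The proof of Proposition~\ref{prp:sharpmult} already shows that there one has the \emph{strict} inequality $m < \mathcal{D} + \mathcal{N} + 2\beta - 1$, so a maximally degenerate eigenvalue cannot lie in this regime. A parallel short computation shows that the equality ${{m}}_{n+m-1} = {{M}}_n$ in this regime would force $n - 2 + \mathcal{D} + \beta < 0$, which is impossible since $n \geq 2$; hence a sharp degenerate eigenvalue cannot lie there either. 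Consequently both properties can occur only in the regime $n + m - 1 \geq \mathcal{N} + \beta$, where the equivalence of the previous step applies. Chaining the three equivalences, namely $\lambda_n \text{ sharp degenerate} \iff {{m}}_{n+m-1} = {{M}}_n \iff m = \mathcal{D} + \mathcal{N} + 2\beta - 1$, yields the lemma.
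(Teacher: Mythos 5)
Your proposal is correct and follows essentially the same route as the paper: reduce sharpness of the degenerate block to the single equality ${{m}}_{n+m-1} = {{M}}_n$ via the sandwich ${{m}}_{n+m-1} \leq \lambda_{n+m-1} = \lambda_n \leq {{M}}_n$, then split on whether $n+m-1 \geq \mathcal{N}+\beta$, obtaining the equivalence with $m = \mathcal{D}+\mathcal{N}+2\beta-1$ in the main regime and a contradiction ($n + \mathcal{D} + \beta < 2$ with $n \geq 2$) in the other. Your treatment is slightly more explicit than the paper's about the squeeze step and about why both properties force the regime $n+m-1 \geq \mathcal{N}+\beta$, but the argument is the same.
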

\begin{proof}
    Because of the simplicity of the ground state, $n \geq 2$.
    Assume $n + m - 1 \geq \mathcal{N} + \beta$; hence by definition ${{m}}_{n + m - 1} = (\pi / \mathcal{L})^2 (n + m - 1 - (\mathcal{N} + \beta)/2)^2$ and
    \begin{align}\label{eq:lem_step1}
        {{M}}_n = {{m}}_{n + m - 1} &\iff n - 2 + \mathcal{D} + \frac{\mathcal{N} + \beta}{2} + \beta = n + m - 1 - \frac{\mathcal{N} + \beta}{2} \\ \nonumber
                                    &\iff m = \mathcal{D} + \mathcal{N} + 2\beta - 1.\\
    \intertext{Assume instead $n+m-1 < \mathcal{N} + \beta$; hence ${{m}}_{n + m - 1} = (\pi / \mathcal{L})^2 (n + m - 1)^2 / 4$; we show that this is not compatible with the hypothesis.
    In fact}
    \label{eq:lem_step2}
        {{M}}_n = {{m}}_{n + m - 1} &\iff n - 2 + \mathcal{D} + \frac{\mathcal{N} + \beta}{2} + \beta = \frac{n + m - 1}{2} \\
                                    &\iff m = n + 2\mathcal{D} + \mathcal{N} + 3\beta - 3, \nonumber
    \end{align}
    therefore $n+m-1 < \mathcal{N} + \beta$ reads $n + \mathcal{D} + \beta < 2$  which implies $n = 1$, a contradiction.
\end{proof}

The proof of Lemma~\ref{lem:sharpmult} suggests there might exist simple eigenvalues which are both upper sharp and lower sharp at the same time, which shall be called simple sharp eigenvalues.

Consider first $n \geq 2$: if ${{M}}_n = {{m}}_n$ then either
\begin{enumerate}[(i)]
    \item \label{case:1} $n < \mathcal{N} + \beta$, so by (\ref{eq:lem_step2}) $n=1$, which is excluded,
    \item \label{case:2} $n \geq \mathcal{N} + \beta$, so by (\ref{eq:lem_step1})
    $\mathcal{D} + \mathcal{N} + 2\beta = 2$.
\end{enumerate}
    
\noindent If we consider $n = 1$ then either

\begin{enumerate}[(i)]
  \setcounter{enumi}{2}
    \item \label{case:3}  $\mathcal{D} = 0$ thus $\lambda_1 = 0 = {{M}}_1$, so $\mathcal{N} + 3\beta = 2$,
    \item \label{case:4} $\mathcal{D} \neq 0$ and $1 < \mathcal{N} + \beta$, so by (\ref{eq:lem_step2}) $2\mathcal{D} + \mathcal{N} + 3\beta = 3$,
    \item \label{case:5} $\mathcal{D} \neq 0$ and $1 \geq \mathcal{N} + \beta$, so by (\ref{eq:lem_step1}) $\mathcal{D} + \mathcal{N} + 2\beta = 2$.
\end{enumerate}
Case (\ref{case:2}) is satisfied by any of the following:
\begin{itemize}
    \item $\beta = 1$ and $\mathcal{N} = \mathcal{D} = 0$, i.e. the loop graph which should be disregarded as it is an exceptional case for which neither (\ref{lwb}) nor (\ref{upb}) holds.
    \item $\beta = 0$ and $\mathcal{N} + \mathcal{D} = 2$, i.e. the interval with any admissible vertex conditions at its endpoints.
\end{itemize}
Case (\ref{case:3}) implies $\mathcal{N}=2, \beta = 0$, namely the Neumann-Neumann interval.
Case (\ref{case:4}) does not have solutions.
Case (\ref{case:5}) implies $\beta = 0$ and either $\mathcal{N} = 0, \mathcal{D} = 2$, or $\mathcal{N} = \mathcal{D} = 1$ hence the remaining two possible vertex conditions for the single interval.

Therefore the single interval with any of the admissible vertex conditions provides the only three examples of graphs with simple sharp eigenvalues, and in particular the whole spectrum is composed only by simple sharp eigenvalues:

\begin{itemize}
	\item $\mathcal{N}=2,\mathcal{D}=0$, the spectrum is $\lambda_n^{NN} = \frac{\pi^2}{\mathcal{L}^2}(n-1)^2$,
	\item $\mathcal{N}=1,\mathcal{D}=1$, the spectrum is $\lambda_n^{ND} = \frac{\pi^2}{\mathcal{L}^2}(n-\frac{1}{2})^2$,
	\item $\mathcal{N}=0,\mathcal{D}=2$, the spectrum is $\lambda_n^{DD} = \frac{\pi^2}{\mathcal{L}^2}n^2$.
\end{itemize}

\begin{proposition}\label{prp:sharpint}
    An eigenvalue is simple sharp if and only if the underlying graph is a single interval with any of the three possible combinations of vertex conditions listed above.
\end{proposition}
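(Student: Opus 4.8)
The plan is to recast the case analysis already carried out above as an equivalence and to supply the one genuinely geometric step it still lacks. The starting point is that a simple eigenvalue is simple sharp exactly when it is at once upper sharp and lower sharp: for $n \geq 2$ this forces the arithmetic identity $M_n = m_n$, while for the ground state in the case $\mathcal{D} = 0$ it reads $\lambda_1 = 0 = M_1$. These are precisely the relations dissected in cases (\ref{case:1})--(\ref{case:5}), so I would first use them to pin down the admissible parameters and then read off the underlying graph, treating the two implications separately.

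For the forward implication I would invoke that case split directly. Combining the identities (\ref{eq:lem_step1}) and (\ref{eq:lem_step2}) with the ranges $n < \mathcal{N}+\beta$ and $n \geq \mathcal{N}+\beta$ shows that, once the loop graph is discarded as the exceptional configuration for which neither (\ref{lwb}) nor (\ref{upb}) holds, the only surviving solutions are $\beta = 0$ together with $\mathcal{N}+\mathcal{D}=2$, realised by $(\mathcal{N},\mathcal{D}) \in \{(2,0),(1,1),(0,2)\}$. The step I expect to demand the most care is the return from these parameters to the graph itself. Here $\beta = 0$ makes the connected graph $\mathcal{G}$ a tree, and I would read the degree-sum identity $\sum_v (\deg v - 2) = -2$: the two pendant vertices already exhaust the deficit $-2$, so every other vertex must have degree exactly two and no branching can occur. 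Hence $\mathcal{G}$ is a path, and suppressing its inessential degree-two vertices---which by the remarks in Section~1 leaves $\mathcal{G}$, $L_2(\mathcal{G})$ and the domain of the Laplacian unchanged up to isomorphism---collapses it to a single interval carrying the prescribed conditions at its two endpoints.

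For the reverse implication I would simply verify the three intervals by substitution. Each carries the classical, entirely simple spectrum recorded above, and inserting the corresponding parameters into (\ref{lwb}) and (\ref{upb}) yields $\lambda_n = m_n = M_n$ for every $n$; for example in the Dirichlet-Dirichlet case $\mathcal{N}+\beta = 0$ gives $m_n = (\pi/\mathcal{L})^2 n^2 = M_n$, and the other two cases are analogous. The only bookkeeping wrinkle is the ground state in the two cases with $\mathcal{D}=0$, where (\ref{lwb}) is asserted only for $n \geq 2$ and the index $n = 1$ must instead be covered by the separate identity $\lambda_1 = 0 = M_1$, exactly as in case (\ref{case:3}). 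Beyond this and the topological identification above, no serious obstacle remains.
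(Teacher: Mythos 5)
Your proposal is correct and follows essentially the same route as the paper, whose ``proof'' is precisely the case analysis (\ref{case:1})--(\ref{case:5}) preceding the proposition together with the explicit spectra of the three intervals. The only genuine addition is your degree-sum argument showing that $\beta=0$ with $\mathcal{N}+\mathcal{D}=2$ forces the graph to be a path collapsing to a single interval---a step the paper leaves implicit---and this is a sound and welcome piece of bookkeeping rather than a departure in method.
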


From now on we refer to as sharp eigenvalues the eigenvalues which are either simple or degenerate eigenvalues which are sharp.
The above discussion can be summarized by the following statement:
\begin{theorem}\label{thm:characterization}
    Let $\mathcal{G}$ be a graph which is not a cycle and let $\lambda$ be an eigenvalue of some Laplacian over $\mathcal{G}$.
    Then $\lambda$ is sharp if and only if it is maximally degenerate.
\end{theorem}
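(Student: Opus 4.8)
The plan is to classify according to the multiplicity $m$ of $\lambda$ and to assemble the statement from the two cited results: Lemma~\ref{lem:sharpmult} handles the degenerate case $m \geq 2$, while Proposition~\ref{prp:sharpint}, supplemented by an elementary topological observation, handles the simple case $m = 1$. Recall that, with the convention introduced after Proposition~\ref{prp:sharpint}, ``sharp'' means ``sharp degenerate'' when $m \geq 2$ and ``simple sharp'' when $m = 1$, and that ``maximally degenerate'' means $m = \mathcal{D} + \mathcal{N} + 2\beta - 1 = m_\mathcal{U}$.

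First I would dispose of the degenerate case. If $m \geq 2$ there is nothing new to prove: Lemma~\ref{lem:sharpmult} asserts precisely that such a $\lambda$ is sharp degenerate if and only if $m = \mathcal{D} + \mathcal{N} + 2\beta - 1$, which is exactly the condition of maximal degeneracy. This settles both implications simultaneously whenever $\lambda$ is degenerate.

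It then remains to treat the simple case $m = 1$, where maximal degeneracy reads $1 = m_\mathcal{U} = \mathcal{D} + \mathcal{N} + 2\beta - 1$, i.e. $\mathcal{D} + \mathcal{N} + 2\beta = 2$. For the implication ``sharp $\Rightarrow$ maximally degenerate'' I would invoke Proposition~\ref{prp:sharpint}: a simple sharp $\lambda$ forces $\mathcal{G}$ to be one of the three admissible single intervals, each of which has $\beta = 0$ and $\mathcal{N} + \mathcal{D} = 2$, so $\mathcal{D} + \mathcal{N} + 2\beta = 2$ and $\lambda$ is maximally degenerate. For the converse I would argue combinatorially from $\mathcal{D} + \mathcal{N} + 2\beta = 2$: since $\beta \geq 0$ is an integer, either $\beta = 1$ with $\mathcal{N} = \mathcal{D} = 0$, or $\beta = 0$ with $\mathcal{N} + \mathcal{D} = 2$. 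The first option describes a connected graph of first Betti number one with no pendant vertices, that is, a cycle, which is excluded by hypothesis. In the second option $\mathcal{G}$ is a tree with exactly two pendant vertices; a tree with exactly two leaves is a path, which after suppressing its degree-two vertices is one of the three single intervals. Proposition~\ref{prp:sharpint} (and the spectra listed before it) then guarantees that the whole spectrum of such an interval is simple sharp, so in particular $\lambda$ is sharp.

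The only genuinely new ingredient beyond the cited results is this topological classification in the simple case, and it is precisely here that the hypothesis ``$\mathcal{G}$ is not a cycle'' is essential: it rules out the configuration $\beta = 1$, $\mathcal{N} = \mathcal{D} = 0$, for which $m_\mathcal{U} = 1$ yet $\mathcal{G}$ is a cycle---an exceptional graph for which the bounds (\ref{lwb}) and (\ref{upb}), and hence the entire comparison underlying sharpness, break down. I expect the main point requiring care to be the verification that $\beta = 0$ together with exactly two pendant vertices forces a path, and that the removal of degree-two vertices legitimately identifies this path with one of the three interval graphs of Proposition~\ref{prp:sharpint}; both are routine but are the place where the argument could be made imprecise.
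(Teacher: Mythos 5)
Your proposal is correct and follows essentially the same route as the paper, which presents Theorem~\ref{thm:characterization} as a summary of Lemma~\ref{lem:sharpmult} (the degenerate case) together with the case analysis leading to Proposition~\ref{prp:sharpint} (the simple case, where $\mathcal{D}+\mathcal{N}+2\beta=2$ forces either the excluded cycle or a single interval). Your explicit topological argument for the converse in the simple case is exactly the content of the paper's enumeration of cases (i)--(v), so nothing is missing.
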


\subsection{Sharp eigenvalues and fully supported eigenspace}

In this section we show a necessary property of the eigenfunctions associated to degenerate sharp eigenvalues.
For its proof we need the following proposition about the regularity of the eigenvalues seen as functions dependent on the length of an edge of the graph.

\begin{proposition}\label{prp:continuouslambda}
    For any fixed index $n \in \mathbb{N}$ and edge $e$ of length $\ell$, the function $\ell \mapsto \lambda_n(\ell)$ is continuous on $(0,+\infty)$.
\end{proposition}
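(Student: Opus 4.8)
The plan is to reduce the problem to a fixed Hilbert space and then invoke the min-max characterization of the eigenvalues. The obstacle to a direct application of analytic perturbation theory is that varying the length $\ell$ of the edge $e$ changes the underlying space $L_2(\mathcal{G})$ itself, so there is no single fixed operator to perturb. To circumvent this I would first rescale. Parametrize $e$ as $[0,\ell]$ and introduce the affine change of variable $x = \ell t$, $t\in[0,1]$, sending a function $f$ on $e$ to $\tilde f(t)=f(\ell t)$ on the reference interval $[0,1]$, leaving all other edges untouched. This identifies the form domain of $L(\mathcal{G},D)$ with a single, $\ell$-independent set $\mathcal{H}$ of functions on the reference graph $\mathcal{G}_0$ where $e$ has length one (those lying in $H^1$, continuous at the vertices, and vanishing on $D$); the continuity and Dirichlet conditions are preserved because the change of variable fixes the endpoint values $\tilde f(0)=f(0)$ and $\tilde f(1)=f(\ell)$, and this remains true if $e$ is a loop, where both endpoints are glued to the same vertex.

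Under this identification the quadratic form and the $L_2$-norm become
\begin{equation*}
Q_\ell(f) = \frac{1}{\ell}\int_0^1 |\tilde f'|^2\,dt + \int_{\mathcal{G}\setminus e} |f'|^2\,dx, \qquad \|f\|_\ell^2 = \ell\int_0^1 |\tilde f|^2\,dt + \int_{\mathcal{G}\setminus e} |f|^2\,dx,
\end{equation*}
both acting on the fixed domain $\mathcal{H}$, and for each fixed $f\in\mathcal{H}$ they depend continuously (indeed real-analytically) on $\ell\in(0,+\infty)$.

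With this reduction in hand the eigenvalues are given by the min-max principle,
\begin{equation*}
\lambda_n(\ell) = \min_{\substack{S\subseteq\mathcal{H}\\ \dim S = n}}\ \max_{0\neq f\in S}\ \frac{Q_\ell(f)}{\|f\|_\ell^2},
\end{equation*}
so it suffices to show that the Rayleigh quotient $R_\ell(f):=Q_\ell(f)/\|f\|_\ell^2$ is continuous in $\ell$ uniformly in $f$. Writing $Q_\ell(f)=\ell^{-1}A(f)+B(f)$ and $\|f\|_\ell^2=\ell\,C(f)+D(f)$ with $A,B,C,D\ge 0$ the edge-wise Dirichlet energies and masses, for $\ell,\ell'$ in any compact subinterval of $(0,+\infty)$ the ratio $Q_\ell(f)/Q_{\ell'}(f)$ lies between $\min\{1,\ell'/\ell\}$ and $\max\{1,\ell'/\ell\}$, while $\|f\|_\ell^2/\|f\|_{\ell'}^2$ lies between $\min\{1,\ell/\ell'\}$ and $\max\{1,\ell/\ell'\}$, both independently of $f$. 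Hence $R_\ell(f)/R_{\ell'}(f)\to 1$ uniformly in $f\neq 0$ as $\ell\to\ell'$, which transfers directly to the min-max values and yields $\lambda_n(\ell')/(1+o(1))\le\lambda_n(\ell)\le(1+o(1))\lambda_n(\ell')$, i.e. continuity of $\lambda_n$ at every $\ell'$.

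I expect the rescaling step to be the point requiring genuine care: correctly identifying the $\ell$-dependent forms on the common domain $\mathcal{H}$ and verifying that none of the vertex conditions are disturbed by the change of variable. Once that is set up, the uniform two-sided estimate of the Rayleigh quotient and its passage through the min-max is routine. (An alternative route is to realize the $\lambda_n(\ell)$ as the zeros of the secular determinant, which is jointly analytic in $(\lambda,\ell)$, and to invoke continuity of roots; but tracking the ordering and multiplicities of the eigenvalues makes the form-theoretic argument cleaner.)
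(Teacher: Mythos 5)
Your proof is correct, and its engine is the same as the paper's: both arguments rescale the edge $e$, transplant test functions between the original and the rescaled graph, and push a comparison of Rayleigh quotients through the Courant--Fischer min-max characterization. The one substantive difference is how the two-sided bound is obtained. The paper transplants only a minimizing subspace for $\lambda_n(\ell)$, which yields the single one-sided inequality $\lambda_n(\rho\ell)\le\rho^{-2}\lambda_n(\ell)$ for $\rho\le 1$; the opposite inequality $\lambda_n(\ell)\le\lambda_n(\rho\ell)$ is then imported from the monotonicity of eigenvalues under edge lengthening (Corollary 3.12 of \cite{BKKM19}), and the general estimate $\min\{1,\rho^{-2}\}\lambda_n(\ell)\le\lambda_n(\rho\ell)\le\max\{1,\rho^{-2}\}\lambda_n(\ell)$ follows by swapping $\ell$ and $\rho\ell$. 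You instead pull everything back to a single $\ell$-independent form domain and prove a mediant-type comparison of the quadratic form and the norm that is \emph{uniform in the test function}; since that uniform two-sided bound transfers to both the inner maximum and the outer minimum in the min-max formula, you get both inequalities at once. Your version is therefore self-contained (no surgery or monotonicity result is invoked), at the cost of slightly more bookkeeping; the constants you obtain, built from $\min\{1,\ell'/\ell\}$ and $\max\{1,\ell/\ell'\}$, are equivalent to the paper's $\rho^{\pm 2}$ factors, and your observation that the affine change of variables preserves the vertex values, hence the continuity and Dirichlet conditions, correctly disposes of the only delicate point in the reduction.
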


\begin{proof}
    Consider the Courant-Fischer eigenvalues characterizations via the Rayleigh quotient
    \begin{equation}\label{eq:rayleigh}
        \lambda_n
        = \min_{\substack{X \subset \mathfrak{D}_q \\ \dim(X) = n}} \max_{u \in X} \frac{\norm{u'}_2^2}{\norm{u}_2^2}
        = \norm{\psi_n'}_2^2,
    \end{equation}
    where $\mathfrak{D}_q = \mathfrak{D}_q(\mathcal{G},D) := \{f \in \textit{H}^{\,1}(\mathcal{G}) \cap \mathcal{C}(\mathcal{G}) : f(v) = 0 \,\,\forall v \in D \}$ and $\psi_n$ is any normalized eigenfunction associated to $\lambda_n$.
    In order to prove the statement we show that $\rho \mapsto \lambda_n(\rho\ell)$ is continuous in $\rho = 1$.
    Let $\mathcal{G}_\rho$ be the modification of $\mathcal{G}$ where the edge $e$ is stretched by a factor $\rho$, i.e. $e$ is replaced by $\rho e$ and consequently $\ell$ replaced by $\rho \ell$.
    Let $X \subset \mathfrak{D}_q$ be any subset realizing the minimum in (\ref{eq:rayleigh}).
    Let $X_\rho \subset \mathfrak{D}_q(\mathcal{G}_\rho)$ be the space obtained from $X$ by stretching each function over the edge $e$, i.e. $f_\rho(x) = f(x)$ if $x \in \mathcal{G}_\rho \setminus \rho e$ and $f_\rho(x) = f(x/\rho)$ if $x \in \rho e$.
    From the Rayleigh quotient it follows that
    \begin{equation}\label{eq:rayleighbound2}
        \lambda_n(\rho \ell) \leq \max_{u_\rho \in X_\rho} \frac{\norm{u_\rho'}_2^2}{\norm{u_\rho}_2^2}.
    \end{equation}
    We compute 
    \begin{equation}
    \begin{aligned}
        \norm{u_\rho'}_{L_2(\mathcal{G}_\rho)}^2
        &= \int_{\mathcal{G}_\rho \setminus \rho e} (u_\rho')^2\,dx + \int_{\rho e} (u_\rho')^2\,dx \\
        &=\int_{\mathcal{G} \setminus e} (u')^2\,dx + \int_{e} \left( \frac{1}{\rho} u' \right)^2 \rho \,dx \\
        &=\int_{\mathcal{G}} (u')^2\,dx + \left( \frac{1}{\rho} - 1 \right) \int_{e} \left( u' \right)^2 \,dx
    \end{aligned}
    \end{equation}
    and similarly we also obtain
    \begin{eqnarray}
        \norm{u_\rho}_{L_2(\mathcal{G}_\rho)}^2 =\int_{\mathcal{G}} u^2\,dx + \left( \rho - 1 \right) \int_{e} u^2 \,dx.
    \end{eqnarray}
    Therefore, if $\rho \leq 1$ we have the following upper estimate
    \begin{equation}
    \begin{aligned}
        \max_{u_\rho \in X_\rho} \frac{\norm{{u_\rho}'}_2^2}{\norm{u_\rho}_2^2} 
        &= \max_{u \in X} \frac{\norm{u'}_{L_2(\mathcal{G})}^2 + (\frac{1}{\rho} - 1)\norm{u'}_{L_2(e)}^2 }{\norm{u}_{L_2(\mathcal{G})}^2 + (\rho-1) \norm{u}_{L_2(e)}^2}  \\
        &\leq \frac{1}{\rho^2} \max_{u \in X} \frac{\norm{u'}_{L_2(\mathcal{G})}^2}{\norm{u}_{L_2(\mathcal{G})}^2}  \\
        &\leq \frac{1}{\rho^2} \lambda_n(\ell).
    \end{aligned}
    \end{equation}
    Moreover, from the monotonicity of the eigenvalues (see for example Corollary 3.12 in \cite{BKKM19}) we know that $\rho \leq 1 \Rightarrow \lambda_n(\ell) \leq \lambda_n(\rho \ell)$.
    
    Thus for $\rho \leq 1$ we have
    \begin{equation}
        \lambda_n(\ell) \leq \lambda_n(\rho \ell) \leq \frac{1}{\rho^2} \lambda_n(\ell).
    \end{equation}
    By changing $\ell$ with $\ell / \rho$ then we can deduce the more general inequality for any $\rho > 0$:
    \begin{equation}
        \min \left\{ 1, \rho^{-2} \right\} \lambda_n(\ell) \leq \lambda_n(\rho \ell) \leq \max \left\{ 1, \rho^{-2} \right\} \lambda_n(\ell),
    \end{equation}
    which shows the continuity in $\rho = 1$ of $\rho \mapsto \lambda_n(\rho\ell)$ and hence the claimed continuity of $\ell \mapsto \lambda_n(\ell)$ for $\ell \in (0,+\infty)$.
\end{proof}

The next lemma shows that both upper and lower sharp eigenvalues can be associated to eigenfunctions that do not identically vanish on any edge of the graph.

\begin{lemma}\label{lem:prop}
	If $\lambda_n$ is either lower or upper sharp then for each edge $e$ there exists an eigenfunction associated to $\lambda_n$ which is not identically zero on $e$.
\end{lemma}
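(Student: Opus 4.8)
The plan is to argue by contradiction: suppose that for some edge $e$ of length $\ell_0$ \emph{every} eigenfunction associated with $v := \lambda_n$ vanishes identically on $e$, and let $d \geq 1$ be the multiplicity of $v$. The crucial first observation is that such eigenfunctions are completely insensitive to the length of $e$. Indeed, if $\psi$ vanishes on $e$ then by continuity it vanishes at both endpoints of $e$ and its oriented derivative along $e$ is zero there; hence keeping $\psi \equiv 0$ on the rescaled edge produces a function that still lies in $\mathfrak{D}(\mathcal{G}_\ell, D)$ (continuity, Kirchhoff and Dirichlet conditions at the endpoints of $e$ are untouched) and still solves $-\psi'' = v\psi$ on every edge. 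Therefore $v$ remains an eigenvalue of $\mathcal{G}_\ell$ of multiplicity at least $d$ for \emph{every} length $\ell$ of $e$. I would first dispose of the trivial case $v = 0$, where the eigenfunction is locally constant and hence nowhere identically zero.

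Next I would pin down the \emph{index} of $v$ under the perturbation. Write the degenerate block of $v$ at $\ell_0$ as the consecutive indices $p, \dots, p + d - 1$, so that $\lambda_{p-1}(\ell_0) < v < \lambda_{p+d}(\ell_0)$, one inequality holding by the definition of the multiplicity and the other by Observation~\ref{obs:strictineq} applied at the sharp endpoint of the block (when $p=1$ the left guard is simply vacuous). By Proposition~\ref{prp:continuouslambda} the maps $\ell \mapsto \lambda_{p-1}(\ell)$ and $\ell \mapsto \lambda_{p+d}(\ell)$ are continuous, so there is $\delta > 0$ with $\lambda_{p-1}(\ell) < v < \lambda_{p+d}(\ell)$ whenever $|\ell - \ell_0| < \delta$. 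For such $\ell$ any eigenvalue equal to $v$ must carry an index in the window $\{p, \dots, p+d-1\}$, which has exactly $d$ slots; since the multiplicity of $v$ is already at least $d$, a pigeonhole count forces $\lambda_p(\ell) = \dots = \lambda_{p+d-1}(\ell) = v$. In particular $\lambda_n(\ell) = v$ throughout $(\ell_0 - \delta, \ell_0 + \delta)$.

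Finally I would exploit the explicit dependence of the two bounds on the total length $\mathcal{L} = \mathcal{L}(\ell)$, which is strictly increasing in $\ell$. If $\lambda_n$ is upper sharp then $n = p$ and $v = M_p(\ell_0) = (\pi/\mathcal{L}_0)^2 C^2$ with $C = p - 2 + \mathcal{D} + (\mathcal{N}+\beta)/2 + \beta \neq 0$; stretching the edge to $\ell \in (\ell_0, \ell_0 + \delta)$ strictly decreases $M_p(\ell)$ below $v$, so $\lambda_p(\ell) = v > M_p(\ell)$ violates~(\ref{upb}). Symmetrically, if $\lambda_n$ is lower sharp then $n = p + d - 1$ and $v = m_n(\ell_0)$; shrinking the edge to $\ell \in (\ell_0 - \delta, \ell_0)$ strictly increases $m_n(\ell)$ above $v$, so $\lambda_n(\ell) = v < m_n(\ell)$ violates~(\ref{lwb}). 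Either way one reaches a contradiction. Here one should record that (\ref{lwb})--(\ref{upb}) still apply to $\mathcal{G}_\ell$ (it is never a loop graph once $\mathcal{G}$ is not) and that the constant $C$, respectively the coefficient defining $m_n$, is nonzero, which is guaranteed precisely because $v > 0$; the remaining degenerate configurations are exactly the single intervals of Proposition~\ref{prp:sharpint}, handled by the $v=0$ remark.

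The entire conceptual content sits in the second paragraph. The subtlety is not that $v$ survives as an eigenvalue---that is immediate---but that it survives \emph{at the same index} $n$, so that the sharp bound can still be tested against it. The pigeonhole step, made possible by the strict separation of the neighbouring eigenvalues together with the continuity supplied by Proposition~\ref{prp:continuouslambda} (note that no monotonicity input is needed for the lemma itself), is what prevents $v$ from quietly sliding to an adjacent index and thereby escaping the contradiction. I expect this to be the main point requiring care, together with the bookkeeping of the exceptional graphs where $v = 0$ or the bound constants degenerate.
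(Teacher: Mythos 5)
Your proposal is correct and follows essentially the same route as the paper: fix the eigenfunctions vanishing on $e$ under a length perturbation of $e$, use Proposition~\ref{prp:continuouslambda} together with the strict separation from the neighbouring eigenvalues to keep $\lambda_n$ pinned at the same index, and contradict the strict monotonicity of the bound in the total length. Your pigeonhole count and the explicit treatment of the lower-sharp case (shrinking rather than stretching) and of the $v=0$ degeneracy are slightly more careful renderings of steps the paper states tersely or leaves symmetric, not a different argument.
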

\begin{proof}
    Assume $\lambda_n = {{M}}_n$ and let $\ell(e) = \ell_0$.
    Making $e$ longer increases the total length of the graph and consequently decreases ${{M}}_n$,  we write ${{M}}_n(\ell)$ to highlight the estimate dependence on the length of the edge $e$.
    In order not to violate (\ref{upb}), $\lambda_n$ must also decrease, at least as much as its upper bound.
    Assume $\lambda_n$ has multiplicity $m$; hence by Observation~\ref{obs:strictineq} $\lambda_{n-1} < \lambda_n = \dots = \lambda_{n+m-1} < \lambda_{n+m}$.
    By Proposition~\ref{prp:continuouslambda} all eigenvalues are continuous functions in the length $\ell(e)$, so there exists $\varepsilon > 0$ small such that $\forall \ell \in [\ell_0,\ell_0 + \varepsilon]$
    \begin{equation}\label{eq:cond_and}
        \lambda_{n-1}(\ell) < \lambda_n(\ell) \quad \textit{and} \quad \lambda_{n+m-1}(\ell) < \lambda_{n+m}(\ell).
    \end{equation}
    Let $\{ \psi_j\}_{j=n}^{n+m-1}$ be a basis of the $m$-dimensional eigenspace associated to $\lambda_n$.
    If each $\psi_j$ is identically zero on $e$, then $\psi_j$ is still an eigenfunction after perturbing the length of $e$ over the interval $[\ell_0,\ell_0 + \varepsilon]$ and by the Rayleigh quotient it is associated to an eigenvalue equal to $\lambda_n(\ell_0)$ with the same multiplicity $m$.
    Because of (\ref{eq:cond_and}) the indices of the eigenvalues are preserved; hence
    \begin{equation}
        \lambda_j(\ell) \equiv \lambda_j(\ell_0) \quad \forall \ell \in [\ell_0,\ell_0 + \varepsilon].
    \end{equation}
    This leads to the following contradiction 
    \begin{equation}
    \begin{aligned}
        {{M}}_n(\ell_0 + \varepsilon) < {{M}}_n(\ell_0) &= \lambda_n(\ell_0) \\ &= \lambda_n(\ell_0 + \varepsilon) \leq {{M}}_n(\ell_0 + \varepsilon).
    \end{aligned}
    \end{equation}
    Thus there exists an eigenfunction not identically zero on $e$.
\end{proof}
We then have the next corollary.
\begin{corollary}\label{cor:nonzeroeigf}
    If $\lambda$ is a sharp eigenvalue then there exists an eigenfunction associated to $\lambda$ which does not identically vanish on any edge of the graph.
\end{corollary}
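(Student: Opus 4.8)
The plan is to combine Lemma~\ref{lem:prop} with the elementary fact that a finite-dimensional vector space over an infinite field is never the union of finitely many proper subspaces. First I would observe that, by definition, a sharp eigenvalue $\lambda$ — whether simple or degenerate — is both upper sharp (at the smallest index of its degenerate block) and lower sharp (at the largest index), so Lemma~\ref{lem:prop} applies in either reading: for every edge $e$ of $\mathcal{G}$ there is an eigenfunction associated to $\lambda$ that is not identically zero on $e$. The content of the corollary is to upgrade this edge-by-edge statement to the existence of a \emph{single} eigenfunction that is simultaneously nonzero on all edges.

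To do this, let $V_\lambda \subset \mathfrak{D}(L)$ denote the eigenspace of $\lambda$, which is finite-dimensional since the spectrum is discrete with finite multiplicities. For each edge $e \in E(\mathcal{G})$ set
\[
    W_e := \{ \phi \in V_\lambda : \phi|_e \equiv 0 \}.
\]
Each $W_e$ is a linear subspace of $V_\lambda$, and by Lemma~\ref{lem:prop} it is a \emph{proper} subspace, precisely because there exists an eigenfunction associated to $\lambda$ that is not identically zero on $e$. Since $\mathcal{G}$ has finitely many edges, $\{ W_e \}_{e \in E(\mathcal{G})}$ is a finite collection of proper subspaces of $V_\lambda$.

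Finally I would invoke the standard fact that a vector space over an infinite field (here $\mathbb{R}$ or $\mathbb{C}$) cannot be written as a finite union of proper subspaces. Hence $V_\lambda \neq \bigcup_{e} W_e$, and any $\psi \in V_\lambda \setminus \bigcup_e W_e$ is an eigenfunction associated to $\lambda$ that does not vanish identically on any edge, which is the assertion. I do not expect any genuine obstacle here: the only point requiring care is that each $W_e$ is proper, which is exactly what Lemma~\ref{lem:prop} supplies, after which the conclusion is immediate from the finite-union lemma. (Note that the simple sharp case of Proposition~\ref{prp:sharpint} is covered automatically, the underlying graph there being a single interval.)
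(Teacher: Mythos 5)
Your proof is correct, and it matches the paper's (implicit) route: the paper states this corollary without proof as an immediate consequence of Lemma~\ref{lem:prop}, and your argument---that each subspace $W_e$ of eigenfunctions vanishing on $e$ is proper by Lemma~\ref{lem:prop}, and a finite-dimensional space over $\mathbb{R}$ or $\mathbb{C}$ is not a finite union of proper subspaces---is exactly the standard way to make that deduction precise. No gaps.
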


\section{Main results}

The main theorem is proven in a constructive manner and relies upon the next two lemmata, each of them providing an operation which preserves sharp eigenvalues.
The first of them, Lemma~\ref{lem:joinDirichlet}, tells us that joining together graphs which share a sharp eigenvalue preserves not only the eigenvalue but also its sharpness.

Let $\{\mathcal{G}_i\}_{i=1}^p$ be a finite set of graphs, each of them with $\mathcal{N}_i$ Neumann, $\mathcal{D}_i \neq 0$ Dirichlet pendant vertices and $\beta_i$ first Betti numbers respectively.
For each $\mathcal{G}_i$ fix a Dirichlet vertex $v_i \in D_i$ (see for example the set of graphs on the left of figure \ref{fig:joinDirichlet}).
Assume that the spectrum of each $\mathcal{G}_i$ contains the same eigenvalue $\lambda$, not necessarily with the same index $\lambda_{n_i}(\mathcal{G}_i) = \lambda \,\forall i$.
Consider the graph $\mathcal{G}$ obtained by the disjoint union of all graphs $\bigsqcup_{i=1}^p \mathcal{G}_i$ with the 
vertices $v_i$ replaced by a single vertex endowed with standard vertex conditions as in Figure~\ref{fig:joinDirichlet}.
Then $\lambda$ is still an eigenvalue of $\mathcal{G}$.
We have then the following statement.

\begin{lemma}\label{lem:joinDirichlet}
    If $\lambda_{n_i} = \lambda$ is a sharp eigenvalue of each $\mathcal{G}_i$, then $\lambda_n = \lambda, n = 2 - p + \sum_{i=1}^p n_i$ is also a sharp eigenvalue of $\mathcal{G}$.
\end{lemma}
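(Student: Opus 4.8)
The plan is to prove that $\lambda$ is \emph{maximally degenerate} in $\mathcal{G}$ and then invoke Theorem~\ref{thm:characterization}, reducing the index formula to an interlacing count. Throughout I take $p \geq 2$, so that the glued vertex $v$ has degree $p$ and carries standard, non-pendant conditions. First I would record the combinatorial data of $\mathcal{G}$: merging the $p$ pendant Dirichlet vertices $v_i$ into one internal vertex leaves the edge set unchanged and lowers the vertex count by $p-1$, so $\beta = \sum_i \beta_i$, while $v$ is no longer pendant, giving $\mathcal{D} = \sum_i \mathcal{D}_i - p$ and $\mathcal{N} = \sum_i \mathcal{N}_i$. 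Writing $m_i = \mathcal{D}_i + \mathcal{N}_i + 2\beta_i - 1$ for the multiplicity of the sharp eigenvalue $\lambda$ in $\mathcal{G}_i$ (maximal by Lemma~\ref{lem:sharpmult} and Proposition~\ref{prp:sharpint}), the bound of Proposition~\ref{prp:sharpmult} for $\mathcal{G}$ becomes $m_{\mathcal{U}}(\mathcal{G}) = \mathcal{D} + \mathcal{N} + 2\beta - 1 = \sum_i m_i - 1$.

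The heart of the argument is to show that $\lambda$ attains this multiplicity. Let $\mathcal{E}_i$ be the $m_i$-dimensional eigenspace of $\mathcal{G}_i$ at $\lambda$, consisting of functions vanishing at the pendant vertex $v_i$, and consider the functional $D_i \colon \mathcal{E}_i \to \mathbb{R}$, $\phi \mapsto \partial \phi(v_i)$. Here Corollary~\ref{cor:nonzeroeigf} is essential: since $\lambda$ is sharp there is $\phi \in \mathcal{E}_i$ not identically zero on the pendant edge at $v_i$, and as $\phi(v_i)=0$ this forces $\partial\phi(v_i) \neq 0$, so $D_i \not\equiv 0$. Any tuple $(\phi_i) \in \prod_i \mathcal{E}_i$ glues to a function on $\mathcal{G}$ vanishing at $v$ (hence continuous there), which is an eigenfunction precisely when the single Kirchhoff relation $\sum_i D_i(\phi_i) = 0$ holds; since each $D_i$ is nonzero this relation cuts the dimension down by exactly one, yielding $\sum_i m_i - 1$ independent eigenfunctions. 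With the reverse inequality from Proposition~\ref{prp:sharpmult}, the multiplicity of $\lambda$ in $\mathcal{G}$ is exactly $\sum_i m_i - 1 = m_{\mathcal{U}}(\mathcal{G})$, so $\lambda$ is maximally degenerate. As $v$ is a cut vertex, $\mathcal{G}$ is not a cycle, and Theorem~\ref{thm:characterization} gives that $\lambda$ is sharp.

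It remains to locate the index. I would compare $\mathcal{G}$ with the graph $\mathcal{G}^{D}$ carrying the same metric structure but a Dirichlet condition at $v$; this condition decouples the pieces, so $\sigma(\mathcal{G}^{D})$ is the disjoint union of the $\sigma(\mathcal{G}_i)$. Hence $\lambda$ has multiplicity $\sum_i m_i$ in $\mathcal{G}^{D}$ and is preceded there by $\sum_i (n_i - 1) = \sum_i n_i - p$ smaller eigenvalues. Passing from $\mathcal{G}$ to $\mathcal{G}^{D}$ restricts the form domain by codimension one, so the counting functions interlace, $\lambda_k(\mathcal{G}) \leq \lambda_k(\mathcal{G}^{D}) \leq \lambda_{k+1}(\mathcal{G})$ (a surgery principle from \cite{BKKM19}). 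Evaluating this interlacing just below $\lambda$ and just above $\lambda$, and feeding in the two multiplicities already computed, pins the number of eigenvalues of $\mathcal{G}$ strictly below $\lambda$ to exactly $\left(\sum_i n_i - p\right)+1$, whence $n = 2 - p + \sum_i n_i$.

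The main obstacle is the non-degeneracy of the functionals $D_i$: this is exactly where sharpness enters, through Corollary~\ref{cor:nonzeroeigf}. Without it the Kirchhoff relation could be vacuous, the glued multiplicity would be $\sum_i m_i$ rather than $\sum_i m_i - 1$, and the identification with $m_{\mathcal{U}}(\mathcal{G})$ would fail. The final index bookkeeping is delicate but routine once the two multiplicities and the codimension-one interlacing are in place.
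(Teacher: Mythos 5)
Your proof is correct and follows the paper's overall strategy --- compute the effect of the gluing on $\mathcal{D},\mathcal{N},\beta$, show that $\lambda$ attains the maximal multiplicity $\mathcal{D}+\mathcal{N}+2\beta-1=\bigl(\sum_i m_i\bigr)-1$, and conclude via Theorem~\ref{thm:characterization} --- but you establish the multiplicity lower bound by a genuinely different mechanism. The paper obtains $\textrm{mult}(\lambda)\geq \bigl(\sum_i m_i\bigr)-1$ purely from the rank-one interlacing between $\bigsqcup_i\mathcal{G}_i$ and $\mathcal{G}$ (the same interlacing you invoke later for the index), constructing no eigenfunctions and making no use of Corollary~\ref{cor:nonzeroeigf}. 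You instead glue eigenfunctions explicitly: tuples $(\phi_i)\in\prod_i\mathcal{E}_i$ subject to the single Kirchhoff relation $\sum_i\partial\phi_i(v_i)=0$ yield a codimension-one space of eigenfunctions of $\mathcal{G}$ vanishing at $v$, with Corollary~\ref{cor:nonzeroeigf} guaranteeing (via $\phi(v_i)=0$ and $\phi\not\equiv 0$ on the pendant edge, hence $\partial\phi(v_i)\neq 0$) that the relation is non-vacuous. This is more hands-on and shows concretely where the one lost dimension goes, but it is strictly speaking redundant: the interlacing you already use for the index bookkeeping delivers the same lower bound for free, since it pins indices $K+2,\dots,K+\sum_i m_i$ of $\mathcal{G}$ to $\lambda$, where $K=\sum_i n_i-p$. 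Your index computation and the final appeal to the multiplicity cap of Proposition~\ref{prp:sharpmult} match the paper's. One small point in your favour: you state explicitly that $p\geq 2$, which both arguments need for the count $\mathcal{D}=\sum_i\mathcal{D}_i-p$ (for $p=1$ the merged vertex would become a Neumann pendant and the arithmetic $m=\mathcal{D}+\mathcal{N}+2\beta-1$ would fail).
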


\begin{figure}[ht]
    \centering
    \includegraphics[width=.8\textwidth]{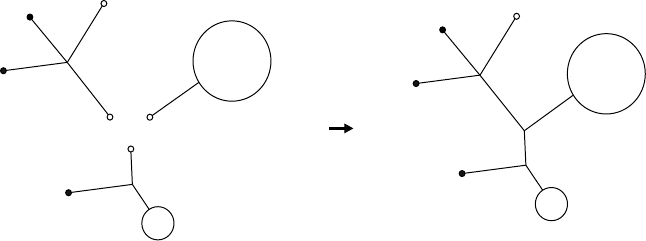}
    \caption{Example of application of Lemma~\ref{lem:joinDirichlet} to three graphs (left) joined at one chosen Dirichlet vertex for each of them in order to obtain the graph on the right.
    Here and in the following figures the symbol $\circ$ stands for a Dirichlet pendant vertex and $\bullet$ for a Neumann pendant vertex.}
    \label{fig:joinDirichlet}
\end{figure}

The above lemma allows us to build trees with sharp eigenvalues: one starts by joining intervals into star graphs with at least one Dirichlet pendant and then the star graphs into a tree.
This Lemma allows the construction of three graphs with any prescribed number of Dirichlet and Neumann pendant vertices which exhibit sharp eigenvalues.
It remains to show that it is as well possible to prescribe the first Betti number and still be able to construct a graph with sharp eigenvalues.
This is achieved by Lemma~\ref{lem:attachloop} which shows that sharp eigenvalues are preserved after attaching a cycle to a Neumann pendant.
We have already mentioned that the loop graph is the only graph which does not satisfy the inequalities (\ref{lwb}~\ref{upb}), in particular we can notice the following:

\begin{proposition}
	The spectrum of the loop graph $L_\mathcal{L}$ of length $\mathcal{L}$ is given by $\sigma(L_\mathcal{L}) = \{\lambda_1 = 0 \} \cup \left\{ \lambda_{2j} = \lambda_{2j+1} = \frac{\pi^2}{\mathcal{L}^2}(2j)^2  : j \in \mathbb{N}_{>0} \right\}$.
	The even eigenvalues of the loop graph exceeds the upper estimate (\ref{upb}) by a term $+\frac{1}{2}$ as follows:
	\begin{equation}
		\lambda_{2j}(L_\mathcal{L}) = \frac{\pi^2}{\mathcal{L}^2} \left( 2j - 2 + \frac{3}{2} + \frac{1}{2} \right)^2.
	\end{equation}
	The odd eigenvalues, excluded the first, differs from the lower estimate (\ref{lwb}) by a term $-\frac{1}{2}$ as follows:
	\begin{equation}
		\lambda_{2j+1}(L_\mathcal{L}) = \frac{\pi^2}{\mathcal{L}^2} \left( 2j + 1 - \frac{1}{2} - \frac{1}{2} \right)^2.
	\end{equation}
\end{proposition}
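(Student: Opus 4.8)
The plan is to realize the loop graph $L_\mathcal{L}$ as the single interval $[0,\mathcal{L}]$ whose two endpoints are identified into one vertex carrying standard conditions, reduce the eigenvalue problem to an ODE with periodic boundary conditions, and solve it explicitly. First I would translate the vertex conditions: continuity at the single vertex forces $f(0)=f(\mathcal{L})$, while the Kirchhoff condition, using the sign convention $\partial f(0)=f'(0)$ and $\partial f(\mathcal{L})=-f'(\mathcal{L})$, reads $f'(0)-f'(\mathcal{L})=0$. Hence the Laplacian on $L_\mathcal{L}$ is exactly $-d^2/dx^2$ on $[0,\mathcal{L}]$ with periodic boundary conditions $f(0)=f(\mathcal{L})$ and $f'(0)=f'(\mathcal{L})$.

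Next I would solve $-f''=\lambda f$ under these conditions. For $\lambda=0$ the admissible solutions are the constants, giving the simple ground state $\lambda_1=0$. For $\lambda=k^2>0$, writing $f(x)=A\cos(kx)+B\sin(kx)$ and imposing the two periodic conditions yields a homogeneous $2\times 2$ linear system in $(A,B)$ whose determinant vanishes precisely when $\cos(k\mathcal{L})=1$, i.e. $k\mathcal{L}=2\pi j$ for $j\in\mathbb{N}_{>0}$. At each such $k$ both conditions hold identically, so the whole two-dimensional solution space qualifies and the eigenvalue $\lambda=(2\pi j/\mathcal{L})^2=\frac{\pi^2}{\mathcal{L}^2}(2j)^2$ has multiplicity exactly two. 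Ordering $\lambda_1=0$ followed by these doubly degenerate values in increasing order gives $\lambda_{2j}=\lambda_{2j+1}=\frac{\pi^2}{\mathcal{L}^2}(2j)^2$, which is the asserted description of $\sigma(L_\mathcal{L})$.

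Finally the two displayed reformulations are purely algebraic checks against the bound-arguments for a graph with $\mathcal{N}=\mathcal{D}=0$ and $\beta=1$, so that $\mathcal{N}+\beta=1$. For the upper estimate one has $2j-2+\tfrac32+\tfrac12=2j$, exhibiting the excess $+\tfrac12$ over the argument $2j-2+\tfrac32=2j-\tfrac12$ of $M_{2j}$; for the lower estimate $2j+1-\tfrac12-\tfrac12=2j$ exhibits the deficit $-\tfrac12$ from the argument $2j+1-\tfrac12$ of $m_{2j+1}$. I would simply substitute $\mathcal{N}+\beta=1$ into (\ref{lwb}) and (\ref{upb}) and record these identities.

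Since every step is an elementary and explicit computation, there is no substantial obstacle; the only point demanding care is the correct derivation of the periodic boundary conditions from the oriented-derivative convention and the verification that each positive eigenvalue is genuinely two-dimensional rather than simple. This is precisely what makes the loop graph exceptional for (\ref{lwb})--(\ref{upb}): its positive eigenvalues land a half-step outside both estimates.
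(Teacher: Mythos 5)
Your proof is correct. The paper gives no proof of this proposition (treating the circle spectrum and the two algebraic identities as elementary), and your argument --- reducing standard vertex conditions at the single degree-two-like vertex to periodic boundary conditions, solving the resulting $2\times 2$ system to get the doubly degenerate eigenvalues $\frac{\pi^2}{\mathcal{L}^2}(2j)^2$, and then substituting $\mathcal{D}=\mathcal{N}=0$, $\beta=1$ into (\ref{lwb}) and (\ref{upb}) --- is exactly the standard computation the paper is implicitly relying on.
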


Therefore, given $\lambda > 0$ and $j \in \mathbb{N}$, the loop graph with length $\ell := 2 j \pi / \sqrt{\lambda}$ has the eigenvalue $\lambda_{2j} = \lambda$ with multiplicity $2$.
Now consider $\mathcal{G}$ any graph with at least one Neumann pendant vertex $v$ with a certain eigenvalue $\lambda_n(\mathcal{G}) = \lambda$.
Let $\mathcal{G}_v$ be the graph obtained by attaching the loop graph of length $\ell$ to the Neumann vertex $v$ with standard vertex conditions imposed there as in Figure~\ref{fig:attachloop}.
We have the following statement:

\begin{lemma}\label{lem:attachloop}
If $\lambda_n = \lambda$ is a sharp eigenvalue of $\mathcal{G}$ then $\lambda_{n_v} = \lambda, n_v = n+2j-1$ is a sharp degenerate eigenvalue of $\mathcal{G}_v$.
In particular, the multiplicity of $\lambda$ going from $\mathcal{G}$ to $\mathcal{G}_v$ increases by one.
\end{lemma}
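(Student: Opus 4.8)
The plan is to analyze the $\lambda$-eigenspace of $\mathcal{G}_v$ directly, deduce that its dimension is exactly one more than that of $\mathcal{G}$, and then read off sharpness and the index from the topological data together with Theorem~\ref{thm:characterization}. First I would record the relevant modes on the attached loop. Since $\ell = 2j\pi/\sqrt{\lambda}$, on the loop edge parametrised by arclength $x\in[0,\ell]$ (both endpoints being $v$) every solution of $-f'' = \lambda f$ has the form $f(x) = a\cos(\sqrt{\lambda}\,x) + b\sin(\sqrt{\lambda}\,x)$ with $\sqrt{\lambda}\,\ell = 2\pi j$. A direct computation shows that such $f$ always matches at $v$ (hence is continuous there, with common value $a$) and that its contribution $f'(0)-f'(\ell)$ to the Kirchhoff sum at $v$ vanishes identically for every $a,b$. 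Thus the cosine mode ($a=1,b=0$) takes value $1$ at $v$ with zero derivative flux, while the sine mode ($a=0,b=1$) vanishes at $v$, again with zero flux.

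Next I would exhibit the $\lambda$-eigenspace of $\mathcal{G}_v$ as the $\lambda$-eigenspace of $\mathcal{G}$ augmented by the span of the sine mode. For the lower bound, each eigenfunction $\phi$ of $\mathcal{G}$ at $\lambda$ extends to $\mathcal{G}_v$ by gluing $\phi$ on $\mathcal{G}$ with $\phi(v)\cos(\sqrt{\lambda}\,x)$ on the loop: continuity holds since both sides equal $\phi(v)$ at $v$, and the Kirchhoff condition at $v$ holds since $\partial\phi(v)=0$ (the Neumann condition at the pendant $v$) and the loop flux is zero. The sine mode, extended by zero over $\mathcal{G}$, is a further eigenfunction, linearly independent from the previous ones as it alone vanishes identically on $\mathcal{G}$. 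For the upper bound, given any eigenfunction $\psi$ of $\mathcal{G}_v$ at $\lambda$, its loop restriction is of the above form, so the loop flux at $v$ is zero; the Kirchhoff condition at $v$ then forces $\partial(\psi|_{\mathcal{G}})(v)=0$, so that $\psi|_{\mathcal{G}}$ obeys the Neumann condition and is an eigenfunction of $\mathcal{G}$ at $\lambda$. Since $\psi$ is determined by $\psi|_{\mathcal{G}}$ (which fixes the cosine coefficient $a=\psi|_{\mathcal{G}}(v)$) together with the single free coefficient $b$, the eigenspace has dimension at most $m+1$. Hence the multiplicity of $\lambda$ rises exactly from $m$ to $m+1$.

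Finally I would establish sharpness and the index. Attaching the loop adds one edge and no new vertex, so $\beta$ increases by one, while $v$ ceases to be a Neumann pendant, so $\mathcal{N}$ decreases by one and $\mathcal{D}$ is unchanged; consequently $m_\mathcal{U}(\mathcal{G}_v)=\mathcal{D}+(\mathcal{N}-1)+2(\beta+1)-1 = m_\mathcal{U}(\mathcal{G})+1$. Because $\mathcal{G}$ has a Neumann pendant it is not a cycle, so by Theorem~\ref{thm:characterization} the sharp eigenvalue $\lambda$ is maximally degenerate in $\mathcal{G}$, i.e. $m=m_\mathcal{U}(\mathcal{G})$; combined with the multiplicity count this gives multiplicity $m+1=m_\mathcal{U}(\mathcal{G}_v)$ in $\mathcal{G}_v$. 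As $\mathcal{G}_v$ is neither a loop graph nor a cycle (the vertex $v$ has degree three), Theorem~\ref{thm:characterization} shows $\lambda$ is sharp in $\mathcal{G}_v$, and being of multiplicity at least two it is sharp degenerate. Writing $n_v$ for its smallest index, upper sharpness in $\mathcal{G}_v$ reads $\sqrt{\lambda} = \frac{\pi}{\mathcal{L}+\ell}\bigl(n_v-2+\mathcal{D}+\frac{(\mathcal{N}-1)+(\beta+1)}{2}+(\beta+1)\bigr)$, while upper sharpness in $\mathcal{G}$ reads $\sqrt{\lambda}=\frac{\pi}{\mathcal{L}}\bigl(n-2+\mathcal{D}+\frac{\mathcal{N}+\beta}{2}+\beta\bigr)$; multiplying through and subtracting, and using $\sqrt{\lambda}\,\ell=2\pi j$, yields $n_v=n+2j-1$.

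The main obstacle is the exact multiplicity count, and in particular its upper bound: it hinges on the fact that at this specific length the loop's flux at $v$ vanishes for every solution, which is precisely what forces the $\mathcal{G}$-restriction of any $\mathcal{G}_v$-eigenfunction to satisfy the original Neumann condition and thus to be a genuine eigenfunction of $\mathcal{G}$. Once this is secured, both the sharpness and the index formula follow from the topological bookkeeping and the upper-sharp identity, with no further analysis required.
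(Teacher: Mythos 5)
Your argument is correct, and it reaches the conclusion by a genuinely different route from the paper. The paper works through the disjoint union $\mathcal{G}\sqcup L_\ell$: it computes the index and multiplicity of $\lambda$ there, treats the gluing at $v$ as a rank-one perturbation whose interlacing inequalities pin the index $n_v=n+2j-1$ and give multiplicity at least $m+1$ (exhibiting the same cosine extensions and sine mode you use, but only as a lower bound), and then invokes Theorem~\ref{thm:characterization} to conclude the multiplicity is exactly the maximal value $m+1$ and that $\lambda$ is sharp. You instead compute the $\lambda$-eigenspace of $\mathcal{G}_v$ exactly: your key extra step is the upper bound, namely that at the length $\ell=2j\pi/\sqrt{\lambda}$ every loop mode has vanishing flux at $v$, so the Kirchhoff condition forces the restriction of any $\mathcal{G}_v$-eigenfunction to satisfy the Neumann condition on $\mathcal{G}$, giving dimension at most $m+1$ without any surgery machinery. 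You then recover the index not from interlacing but from the upper-sharp identity $\lambda=M_{n_v}(\mathcal{G}_v)$ combined with $\lambda=M_n(\mathcal{G})$ and $\sqrt{\lambda}\,\ell=2\pi j$, which is a clean alternative. The trade-off: the paper's surgery/interlacing argument determines the index before knowing sharpness and fits into the general framework of \cite{BKKM19}, while your argument is more elementary and self-contained but leans on sharpness (hence on Theorem~\ref{thm:characterization} and the explicit form of $M_n$) to extract the index. One cosmetic point: in your upper-bound step the restriction $\psi|_{\mathcal{G}}$ could be identically zero (for the pure sine mode), so it is an element of the $\lambda$-eigenspace of $\mathcal{G}$ rather than literally an eigenfunction; the dimension count is unaffected.
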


\begin{figure}[ht]
    \centering
    \includegraphics[width=.8\textwidth]{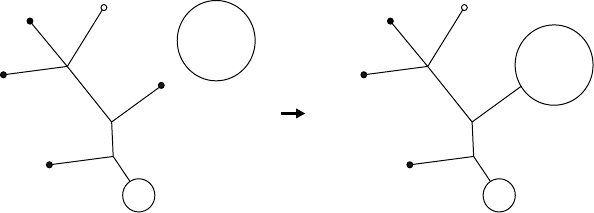}
    \caption{Example of the construction considered in Lemma~\ref{lem:attachloop}.
    On the left the graph $\mathcal{G}$ and the loop graph, on the right the graph $\mathcal{G}_v$.}
    \label{fig:attachloop}
\end{figure}

Lemmata~\ref{lem:joinDirichlet} and~\ref{lem:attachloop} applied to a set of intervals and loop graphs provide the tools to derive the main result, Figure~\ref{fig:thm_main} shows an example of graph constructed following the proof of Theorem~\ref{thm:main}.

\begin{theorem}\label{thm:main}
	Given $\mathcal{N},\mathcal{D}, \beta \in \mathbb{N} \cup \{0\}$ such that $\mathcal{N} + \mathcal{D} + \beta \geq 2$, there exists a graph with $\mathcal{N}$ Neumann, $\mathcal{D}$ Dirichlet pendant vertices respectively and first Betti number $\beta$ which exhibits an infinite sequence of sharp eigenvalues.
\end{theorem}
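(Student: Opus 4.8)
The plan is to prove the statement constructively, assembling the required graph out of single intervals and loop graphs and invoking the two surgery lemmas to carry a whole infinite family of sharp eigenvalues through the construction. The essential preparatory observation is that, by Proposition~\ref{prp:sharpint}, \emph{every} eigenvalue of an ND or a DD interval is a (simple) sharp eigenvalue, so the building blocks come with a ready-made supply of sharp eigenvalues; the real work is to make a single positive number $\lambda$ be simultaneously an eigenvalue of all the blocks, and to do this for infinitely many $\lambda$ at once. I arrange this by fixing the edge lengths to be mutually commensurate: taking every ND interval of length $\tfrac12$, every DD interval of length $1$, and every attached loop of length $2$, the three spectra are $\pi^2(2k+1)^2$, $\pi^2 k^2$ and $\pi^2 k^2$ respectively, whose common values form the infinite set $\Lambda = \{\pi^2(2k+1)^2 : k \in \mathbb{N}\cup\{0\}\}$.

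Set $K := \mathcal{N} + \beta$. First I would build a tree $T$: take $K$ copies of the ND interval and $\mathcal{D}$ copies of the DD interval and join all of them at a single vertex, merging one Dirichlet endpoint of each. Since $\mathcal{N}+\mathcal{D}+\beta \geq 2$ forces $p := K + \mathcal{D} \geq 2$, the merged vertex has degree at least two and becomes an internal standard vertex, so $T$ is a tree ($\beta = 0$) with exactly $K$ Neumann and $\mathcal{D}$ Dirichlet pendant vertices. For every $\lambda \in \Lambda$ the value $\lambda$ is a sharp eigenvalue of each block, so Lemma~\ref{lem:joinDirichlet} shows that $\lambda$ is a sharp eigenvalue of $T$.

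Next I would attach, one after another, $\beta$ loops of length $2$ to $\beta$ distinct Neumann pendants of $T$ (there are $K = \mathcal{N}+\beta \geq \beta$ of them). Each attachment consumes one Neumann pendant, turning it into an internal degree-three standard vertex, leaves $\mathcal{D}$ unchanged, and raises the first Betti number by one; after $\beta$ steps the resulting graph $\mathcal{G}$ has precisely $\mathcal{N}$ Neumann pendants, $\mathcal{D}$ Dirichlet pendants and first Betti number $\beta$. For each $k$ the loop of length $2$ carries $\lambda = \pi^2(2k+1)^2$ as $\lambda_{2j}$ with $j = 2k+1$, so Lemma~\ref{lem:attachloop} applies at every step and keeps $\lambda$ sharp. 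Hence every $\lambda \in \Lambda$ is a sharp eigenvalue of $\mathcal{G}$, and since $\Lambda$ is infinite this yields the required infinite sequence; the degenerate ones are automatically maximally degenerate by Theorem~\ref{thm:characterization}, as $\mathcal{G}$ is never a single cycle.

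I expect the main obstacle to be precisely this coordination step: guaranteeing that one and the same positive $\lambda$---and in fact an infinite family of them---is an eigenvalue of every interval and every loop simultaneously, since the lemmata transport only an \emph{already shared} sharp eigenvalue. The commensurate choice of lengths above resolves it, but one must also verify the boundary cases of the bookkeeping, in particular that $p \geq 2$ whenever $\mathcal{D} = 0$ or $K = 0$ (so that no spurious Dirichlet pendant survives the join) and that the construction never degenerates into the excluded loop or cycle graph; all of these are forced by the hypothesis $\mathcal{N}+\mathcal{D}+\beta \geq 2$.
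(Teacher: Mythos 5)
Your proposal is correct and follows essentially the same route as the paper: the paper also joins $\mathcal{N}+\beta$ Neumann--Dirichlet intervals and $\mathcal{D}$ Dirichlet--Dirichlet intervals at a common Dirichlet vertex via Lemma~\ref{lem:joinDirichlet} and then attaches $\beta$ loops to Neumann pendants via Lemma~\ref{lem:attachloop}, using commensurate lengths (there $\pi/2$, $\pi$, $2\pi$ with common eigenvalues $(2j-1)^2$; yours is the same construction rescaled by $1/\pi$). The only cosmetic difference is that the paper additionally records the explicit indices $n_j$ and $\widetilde{n}_j$ of the resulting sharp eigenvalues, which your argument omits but does not need.
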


\begin{figure}[ht]
    \centering
    \includegraphics[width=.4\textwidth]{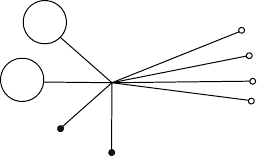}
    \caption{Example of graph constructed via Theorem~\ref{thm:main} with $\mathcal{D} = 4, \mathcal{N} = 2, \beta = 2$ with the lengths of the edges to scale.}
    \label{fig:thm_main}
\end{figure}

In \cite{KaPi11} the authors show that the maximal multiplicity of eigenvalues of the Schr\"odinger operator $-\frac{d^2}{dx^2} + q(x)$ with potential $q \in \textit{L}_1$ defined on a compact graph $\mathcal{G}$ is $m_\mathcal{M} = \mathcal{\beta} + \mathcal{P}^T - 1$, where $\mathcal{P}^T$ is the number of pendant vertices of the tree graph $T_\mathcal{G}$ obtained from $\mathcal{G}$ after contracting each cycle to a vertex.

We observe that the contraction of any cycle may generate at most one new pendant vertex, thus $\mathcal{P}^T - (\mathcal{D} + \mathcal{N}) \leq \beta$.
This means that $m_\mathcal{M} \leq m_\mathcal{U}$ with the equality occurring if and only if  $T_\mathcal{G}$ has exactly $\beta$ pendant vertices more than $\mathcal{G}$, or equivalently $\mathcal{G}$ is a lasso tree.
\begin{definition}
    A lasso tree is a compact metric graph where each cycle is a loop incident to a vertex of degree three.
\end{definition}
The previous observation together with Theorem \ref{thm:characterization} lead us to the following conclusion.

\begin{figure}[ht]
    \centering
    \includegraphics[width=.5\textwidth]{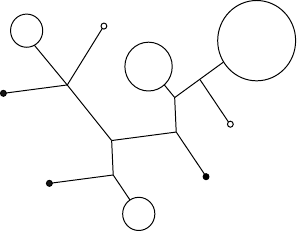}
    \caption{Example of a lasso tree.}
    \label{fig:lasso_tree}
\end{figure}

\begin{theorem}\label{thm:comparison}
    If $\mathcal{G}$ is a metric graph with sharp eigenvalues, then $\mathcal{G}$ is a lasso tree. 
\end{theorem}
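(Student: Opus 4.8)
The plan is to play the two competing multiplicity bounds against each other: the upper bound $m_\mathcal{U} = \mathcal{D} + \mathcal{N} + 2\beta - 1$ of Proposition~\ref{prp:sharpmult} and the exact maximal multiplicity $m_\mathcal{M} = \beta + \mathcal{P}^T - 1$ of Kac and Pivovarchik. The existence of a sharp eigenvalue will force both quantities to be attained simultaneously, and the equality clause of the counting observation recorded just before the theorem will then identify $\mathcal{G}$ as a lasso tree.

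First I would dispose of the exceptional graph so that Theorem~\ref{thm:characterization} becomes available. A cycle, after suppression of its degree-two vertices, is a loop graph; by the Proposition on the loop graph its even eigenvalues exceed the upper estimate (\ref{upb}) and its nontrivial odd eigenvalues undershoot the lower estimate (\ref{lwb}), so none of them is sharp. Consequently any graph carrying a sharp eigenvalue cannot be a cycle, and the characterisation of Theorem~\ref{thm:characterization} applies without reservation.

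Now let $\lambda$ be a sharp eigenvalue of $\mathcal{G}$. By Theorem~\ref{thm:characterization} it is maximally degenerate, so its multiplicity equals $m_\mathcal{U}$. Applying the bound of \cite{KaPi11} to this very eigenvalue yields $m_\mathcal{U} \leq m_\mathcal{M}$, since $m_\mathcal{M}$ dominates the multiplicity of every eigenvalue. The elementary vertex count $\mathcal{P}^T - (\mathcal{D} + \mathcal{N}) \leq \beta$ recorded above supplies the reverse inequality $m_\mathcal{M} \leq m_\mathcal{U}$, valid for every compact graph. The two inequalities therefore collapse to $m_\mathcal{M} = m_\mathcal{U}$, and by the equality clause of that observation this forces $T_\mathcal{G}$ to have exactly $\beta$ pendant vertices more than $\mathcal{G}$, which is precisely the condition that $\mathcal{G}$ be a lasso tree.

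I do not anticipate a genuine obstacle: the statement is really the assembly of Theorem~\ref{thm:characterization}, the external multiplicity bound, and a one-line counting inequality. The steps demanding attention are the bookkeeping that excludes cycles so that Theorem~\ref{thm:characterization} may legitimately be invoked, and a sanity check of the boundary instances---for example the single interval, where $\beta = 0$ and $\mathcal{D} + \mathcal{N} = 2$ give $m_\mathcal{U} = 1$, so that its simple sharp eigenvalues are consistent with the lasso-tree condition, which a tree satisfies vacuously.
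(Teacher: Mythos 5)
Your proposal is correct and follows essentially the same route as the paper: invoke Theorem~\ref{thm:characterization} to see that a sharp eigenvalue has multiplicity $m_\mathcal{U}$, combine the Kac--Pivovarchik bound $m_\mathcal{U}\leq m_\mathcal{M}$ with the counting inequality $m_\mathcal{M}\leq m_\mathcal{U}$, and conclude from the equality case that $\mathcal{G}$ is a lasso tree. Your explicit exclusion of the cycle and the check of the single-interval boundary case are welcome additions that the paper leaves implicit.
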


\begin{remark}
    We point out that Lemmata \ref{lem:joinDirichlet} and \ref{lem:attachloop} can be applied recursively to construct lasso trees, with any possible topological structure, having sharp eigenvalues.
\end{remark}

\subsection{Proofs}

\begin{proof}[Proof of Lemma~\ref{lem:joinDirichlet}]
    By Theorem~\ref{thm:characterization} each $\lambda_{n_i}$ is maximally degenerate; hence with multiplicity $m_i = \mathcal{D}_i + \mathcal{N}_i + 2\beta_i - 1$.
	The spectrum of the disjoint union of the graphs $\{\mathcal{G}_i\}$ is the disjoint union of their eigenvalues, therefore $\lambda$ is an eigenvalue of $\bigsqcup_{i=1}^p \mathcal{G}_i$ with multiplicity $\sum m_i$.
	Since $\lambda_{n_i - 1} < \lambda_{n_i}$ then there are $(\sum n_i) - p$ strictly smaller eigenvalues than $\lambda$, possibly zero.
	Hence the smallest index of $\lambda$ on $\bigsqcup_{i=1}^p \mathcal{G}_i$ is $1 + \sum (n_i - 1)$.
	Replacing the vertices $\{v_i\}$ by a single vertex $v$ endowed with standard vertex conditions is an operation which increases the dimension of the domain of the quadratic form associated to the Laplacian by one, thus it corresponds to a rank one perturbation of the operator which pushes all the eigenvalues down, but no further than one index, i.e. it interlaces the eigenvalues $\lambda_{j-1}( \bigsqcup_{i=1}^p \mathcal{G}_i) \leq \lambda_j( \mathcal{G}) \leq \lambda_{j}( \bigsqcup_{i=1}^p \mathcal{G}_i)\, \forall j \in \mathbb{N}$.
	This operation can be seen as the inverse of a particular case of Theorem 3.4 (2) in \cite{BKKM19}, see also Theorem 3.1.8 in \cite{BeKu13}.
	Since $\lambda$ has multiplicity $\sum m_i$ on $\bigsqcup_{i=1}^p \mathcal{G}_i$ after the change of vertex condition, $\lambda$ is still an eigenvalue on $\mathcal{G}$, with multiplicity at least $m = (\sum m_i) - 1$ and correspondingly with lowest index at most $n = 2 + \sum (n_i - 1) $.
	We shall now show that $n$ and $m$ are indeed exact.
	Notice that when going from $\bigsqcup_{i=1}^p \mathcal{G}_i$ to $\mathcal{G}$ we have that
	\begin{itemize}
	    \item the number of Dirichlet pendant vertices is reduced by $p$,\\ $\mathcal{D} = (\sum_{i=1}^p \mathcal{D}_i) - p$;
	    \item the number of Neumann pendant vertices is preserved,\\ $\mathcal{N} = \sum_{i=1}^p \mathcal{N}_i$;
	    \item the first Betti number is preserved,\\ $\beta = \sum_{i=1}^p \beta_i$;
	\end{itemize}
	Therefore we compute that
	\begin{equation}
	    \begin{aligned}
	    m = \left( \sum m_i \right) - 1 &= \sum_{i=1}^n \left( \mathcal{D}_i + \mathcal{N}_i + 2\beta_i - 1 \right) - 1\\
	    &= \mathcal{D} + \mathcal{N} + 2\beta - 1
	    \end{aligned}
	\end{equation}
	which coincides with the maximal admissible multiplicity.
	Hence $\lambda$ must have precisely multiplicity $m$ and consequently  lowest index $n$.
	By Theorem~\ref{thm:characterization} $\lambda_n$ must be a sharp eigenvalue.
\end{proof}

\begin{proof}[Proof of Lemma~\ref{lem:attachloop}.]
	Consider the spectrum of the disjoint union of $\mathcal{G}$ and $L_\ell$, which is the disjoint union of their spectra.
	Then $\lambda$ has now smallest index $n_v = (n-1) + (2j - 1) + 1$ and if $\lambda$ has multiplicity $m$ on $\mathcal{G}$ then its multiplicity on $\mathcal{G} \sqcup L_\ell$ is $m + 2$.
	The action of attaching the loop graph to $\mathcal{G}$ at the vertex $v \in \mathcal{G}$ is a rank one perturbation of the graph Laplacian which decreases the domain of its associated quadratic form and consequently pushes the eigenvalues up, but no further than the eigenvalue of next index; hence
	\begin{equation}
	    \lambda = \lambda_{n + 2j - 1}(\mathcal{G} \sqcup L_\ell) \leq \lambda_{n + 2j - 1}(\mathcal{G}_v) \leq \lambda_{n+2j}(\mathcal{G} \sqcup L_\ell) = \lambda.
	\end{equation}
	We now show that after this operation the multiplicity of $\lambda$ is reduced by one, i.e. it is $m + 1$, and consequently the smallest index of $\lambda$ on $\mathcal{G}_v$ is still $n_v$.
	Let us parameterize the loop graph by the interval $[-\ell / 2,\ell / 2]$ with the zero placed in $v$.
	Any eigenfunction $\varphi$ on $\mathcal{G}$ can be extended to $\mathcal{G}_v$ by
	\begin{equation}
		\widetilde{\varphi}(x) :=
		\begin{cases}
		\varphi(x)                      &\text{if }x \in \mathcal{G},            \\
		\varphi(v)\cos(\sqrt{\lambda}x) &\text{if }x \in [-\ell / 2,+\ell / 2].
		\end{cases}
	\end{equation}
	So all the eigenfunctions on $\mathcal{G}$ associated to $\lambda_n$ are embedded in $\mathcal{G}_v$.
	In addition the following eigenfunction $\widetilde{\varphi}$ from $L_\ell$ can be embedded in $\mathcal{G}_v$
	\begin{equation}
		\widetilde{\varphi}(x) :=
		\begin{cases}
		    0                       &\text{if }x \in \mathcal{G},            \\
		    \sin(\sqrt{\lambda}x)	&\text{if }x \in [-\ell / 2,+\ell / 2].
		\end{cases}
	\end{equation}
	Hence going from $\mathcal{G} \sqcup L_\ell$ to $\mathcal{G}_v$ the multiplicity of $\lambda$ is reduced by one.
	Now notice that by Theorem~\ref{thm:characterization} the multiplicity of $\lambda$ on $\mathcal{G}$ is $m = \mathcal{D} + \mathcal{N} + 2 \beta - 1$, and hence
	\begin{equation}
        m + 1 = \mathcal{D} + (\mathcal{N}-1) + 2 (\beta+1) - 1.
	\end{equation}
	which is the maximal admissible eigenvalue multiplicity on $\mathcal{G}_v$ since this graph has one more cycle and one less Neumann pendant than $\mathcal{G}$.
	Again by Theorem~\ref{thm:characterization}, $\lambda$ must be a sharp degenerate eigenvalue of $\mathcal{G}_v$ with smallest index necessarily $n_v$.
\end{proof}

\begin{proof}[Proof of Theorem~\ref{thm:main}]
	Let $\mathcal{N}, \mathcal{D}$ and $\beta$ be given.
	In order to construct a graph with these corresponding numbers of Neumann pendants, Dirichlet pendants and first Betti number respectively, it is enough to consider
	\begin{itemize}
	    \item $\mathcal{N} + \beta$ copies of Neumann-Dirichlet intervals $I^{ND}$ of length $\ell_N = \pi / 2$,
	    \item $\mathcal{D}$ copies of Dirichlet-Dirichlet intervals $I^{DD}$ of length $\ell_D = \pi$,
	    \item $\beta$ copies of loop graph $L$ of length $\ell_L = 2\pi$.
	\end{itemize}
	Notice that the above three graphs share the following sequence of eigenvalues:
	\begin{equation}
	    \lambda^{ND}_{j} =
	    \lambda^{DD}_{2j-1} =
	    \lambda^{L}_{2(2j-1)} =
	    (2j - 1)^2.
	\end{equation}
    Apply Lemma~\ref{lem:joinDirichlet} to all the above intervals, both Neumann-Dirichlet and Dirichlet-Dirichlet to deduce that $\{ \lambda_{n_j} = ( 2j + 1 )^2 \}_{j \in \mathbb{N}}$ is a sequence of sharp eigenvalues, each of multiplicity $\mathcal{N} + \beta + \mathcal{D} - 1$, where
    \begin{equation}
        \begin{aligned}
        n_j &= 2 - ((\mathcal{N} + \beta) + \mathcal{D})  + (\mathcal{N} + \beta) \cdot j + \mathcal{D} \cdot (2j - 1) \\
            &= 2 - (\mathcal{N} + \beta) + (\mathcal{N} + \beta + 2 \mathcal{D}) j.
        \end{aligned}
    \end{equation}
	Notice that the length of the loop graph $L$ can be rewritten as
	\begin{equation}
	    \ell_L
	    = 2 \pi
	    = \frac{\pi \cdot 2(2j - 1)}{\sqrt{\lambda^{L}_{2(2j-1)}}}.
	\end{equation}
	Therefore we can recursively apply Lemma~\ref{lem:attachloop} $\beta$ number of times and obtain the new sequence of sharp eigenvalues $\{ \lambda_{\widetilde{n}_j} = (2j - 1)^2 \}$, each of multiplicity $\mathcal{N} + \mathcal{D} + 2 \beta - 1$, which is maximal, where
	\begin{equation}
	    \begin{aligned}
        \widetilde{n}_j
	    &= n_j + 2(2j - 1)\beta - \beta \\
	    &= 2 - (\mathcal{N} + 4\beta) + (\mathcal{N} + 4\beta + 2 \mathcal{D}) j.
	    \end{aligned}
	\end{equation}
\end{proof}

\begin{observation}
    The proof of Theorem~\ref{thm:main} with $\mathcal{N} = \mathcal{D} = 0$ recovers the family of Windmill graphs defined \cite{KuSe18}.
\end{observation}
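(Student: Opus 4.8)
The plan is to run the construction from the proof of Theorem~\ref{thm:main} with $\mathcal{N}=\mathcal{D}=0$ and to identify the resulting metric graph, together with its distinguished sequence of sharp eigenvalues, with the Windmill graphs of \cite{KuSe18}. Since the standing hypothesis $\mathcal{N}+\mathcal{D}+\beta\geq 2$ reduces to $\beta\geq 2$ in this regime, the statement concerns exactly the parameter range $\mathcal{N}=\mathcal{D}=0$, $\beta\geq 2$ for which the Windmill graphs are defined.

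First I would specialize the list of building blocks. With $\mathcal{D}=0$ the $I^{DD}$ intervals are absent, and with $\mathcal{N}=0$ the $\mathcal{N}+\beta$ copies of $I^{ND}$ become exactly $\beta$ Neumann--Dirichlet intervals of length $\pi/2$; the remaining ingredients are the $\beta$ loop graphs of length $2\pi$. Applying Lemma~\ref{lem:joinDirichlet} to the $\beta$ intervals at their Dirichlet endpoints yields the star $S_\beta$, a single central vertex with standard vertex conditions from which $\beta$ edges of length $\pi/2$ emanate, each ending in a Neumann pendant. This is the hub together with the spokes of the prospective windmill.

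Next I would carry out the $\beta$ loop attachments. Applying Lemma~\ref{lem:attachloop} once at each Neumann pendant of $S_\beta$ glues a loop of length $2\pi$ there, making each pendant a vertex of degree three; no vertex of degree two arises along the spokes, the only exception being the hub itself when $\beta=2$, in which case its two incident spokes coalesce into a single edge of length $\pi$. The resulting graph $W_\beta$ is the windmill with $\beta$ blades: a central hub carrying $\beta$ identical lasso arms, each consisting of a spoke of length $\pi/2$ capped by a loop of length $2\pi$, with $\mathcal{N}=\mathcal{D}=0$, first Betti number $\beta$, and total length $\mathcal{L}=5\pi\beta/2$. By the proof of Theorem~\ref{thm:main} it carries the sharp sequence $\{(2j-1)^2\}_{j}$, each eigenvalue maximally degenerate of multiplicity $m_\mathcal{U}=2\beta-1$.

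It then remains to match $W_\beta$ with the Windmill graph of \cite{KuSe18}: the combinatorial type (a hub with $\beta$ lasso blades), the spoke-to-loop length ratio $1:4$, and the sharp eigenvalue sequence with its multiplicities. The main point requiring care---and the expected obstacle---is the reconciliation of conventions, since \cite{KuSe18} may normalise the total length and index the spectrum differently; the identification is therefore to be read up to a global rescaling of the metric that multiplies every edge length by some $\rho>0$ and hence every eigenvalue by $\rho^{-2}$. After such a rescaling $W_\beta$ coincides with the Windmill graph of \cite{KuSe18} and the two sharp sequences match, which is the content of the observation.
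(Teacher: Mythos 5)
Your proposal is correct and is essentially the paper's own (implicit) argument: the paper states this observation without any proof, and the intended verification is exactly what you did, namely running the construction of Theorem~\ref{thm:main} with $\mathcal{N}=\mathcal{D}=0$ so that the $\beta$ Neumann--Dirichlet intervals of length $\pi/2$ join into a star and the $\beta$ loops of length $2\pi$ attach at its Neumann tips, yielding the hub-with-$\beta$-lasso-blades graph of \cite{KuSe18} with the sharp sequence $\{(2j-1)^2\}_j$ of multiplicity $2\beta-1$. Your extra care about the degenerate hub when $\beta=2$ and about matching normalization conventions up to global rescaling goes beyond what the paper records, but does not change the route.
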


\section*{Acknowledgement}

The author wishes to thank Pavel Kurasov for valuable suggestions and guiding, and Jacob Muller.

\bibliography{Arxiv_manuscript}{}

\begin{thebibliography}{1}

\bibitem{BKKM17}
Gregory Berkolaiko, James~B. Kennedy, Pavel Kurasov, and Delio Mugnolo.
\newblock Edge connectivity and the spectral gap of combinatorial and quantum
  graphs.
\newblock {\em J. Phys. A}, 50(36):365201, 29, 2017.

\bibitem{KuSe18}
Pavel Kurasov and Andrea Serio.
\newblock On the sharpness of spectral estimates for graph {L}aplacians.
\newblock {\em Rep. Math. Phys.}, 82(1):63--80, 2018.

\bibitem{KaPi11}
I.~Kac and V.~Pivovarchik.
\newblock On multiplicity of a quantum graph spectrum.
\newblock {\em J. Phys. A}, 44(10):105301, 14, 2011.

\bibitem{BeKu13}
Gregory Berkolaiko and Peter Kuchment.
\newblock {\em Introduction to quantum graphs}, volume 186 of {\em Mathematical
  Surveys and Monographs}.
\newblock American Mathematical Society, Providence, RI, 2013.

\bibitem{Ku20}
Pavel Kurasov.
\newblock {\em Quantum graphs: spectral theory and inverse problems}.
\newblock Birkh\"auser, to appear.

\bibitem{KuSt02}
P.~Kurasov and F.~Stenberg.
\newblock On the inverse scattering problem on branching graphs.
\newblock {\em J. Phys. A}, 35(1):101--121, 2002.

\bibitem{Mu19}
Delio Mugnolo.
\newblock What is actually a metric graph?, 2019.

\bibitem{Ku19}
Pavel Kurasov.
\newblock On the ground state for quantum graphs.
\newblock {\em Lett. Math. Phys.}, 109(11):2491--2512, 2019.

\bibitem{BKKM19}
Gregory Berkolaiko, James~B. Kennedy, Pavel Kurasov, and Delio Mugnolo.
\newblock Surgery principles for the spectral analysis of quantum graphs.
\newblock {\em Trans. Amer. Math. Soc.}, 372(7):5153--5197, 2019.

\end{thebibliography}
\bibliographystyle{unsrt}

\end{document}